\providecommand{\U}[1]{\protect\rule{.1in}{.1in}}
\newtheorem{theorem}{Theorem}
\newtheorem{corollary}[theorem]{Corollary}
\newtheorem{definition}[theorem]{Definition}
\newtheorem{lemma}[theorem]{Lemma}
\newtheorem{notation}[theorem]{Notation}
\newtheorem{remark}[theorem]{Remark}
\def\Rhat{\widehat{R}}
\def\id{{\rm id\,}}     %Funktionen und Funktoren
\def\Ext{{\rm Ext}\,}
\def\Ker{{\rm Ker}\,}
\def\l{\lambda}
\def\b{\beta}
\def\g{\gamma}
\def\kk{\kappa}
\def\restr{\lceil}
\def\a1{\aleph_1}                   %sonstiges
\def\restr{\upharpoonright}
\def\id{\mathop{\rm id}\nolimits}
\def\Ext{\mathop{\rm Ext}\nolimits}
\def\Ker{\mathop{\rm Ker}\nolimits}
\def\max{\mathop{\rm max}\nolimits}
\def\Z{{\mathbb Z}}
\def\Q{{\mathbb Q}}
\def\bS{{\mathbb S}}
\def\bar{\overline }
\def\etabar{{\bar \eta}}
\def\nubar{{\bar \nu}}
\def\a{\alpha}
\def\b{\beta}
\def\g{\gamma}
\def\om{\omega}
\def\va{\varphi}
\def\n{\nu}
\def\l{\lambda}
\def\d{\delta}
\def\aln{{\aleph_0}}
\def\al-n{{\aleph_n}}
\def\n+1d{{}^{ n+1 \downarrow }\l}
\def\size#1{\left|\,#1\,\right|}
\def\Zhat{\widehat \Z}
\def\Rhat{\widehat R}
\def\Bhat{\widehat B}
\def\restl{\mathop{\upharpoonleft}}
\def\restr{\mathop{\upharpoonright}}
\def\to{\rightarrow}
\def\lup{{}^{\omega \uparrow }\l}
\def\lupf{{}^{\omega \uparrow > }\l}
\begin{document}

\title{$\aleph_k$-free cogenerators}

\author{Manfred Dugas}
\address[Manfred Dugas]{Department of Mathematics, Baylor University, One Bear Place
\#97328, Waco, TX 76798-7328, USA}
\email{manfred\_dugas@baylor.edu}

\author{Daniel Herden}
\address[Daniel Herden]{Department of Mathematics, Baylor University, One Bear Place
\#97328, Waco, TX 76798-7328, USA}
\email{daniel\_herden@baylor.edu}

\author{Saharon Shelah}
\address[Saharon Shelah]{The Hebrew University of Jerusalem, Einstein Institute of
Mathematics, Edmond J. Safra Campus, Givat Ram, Jerusalem 91904, Israel\linebreak
\mbox{} \quad Department of Mathematics, Hill Center-Busch Campus, Rutgers, The State
University of New Jersey, 110 Frelinghuysen Road, Piscataway, NJ 08854-8019, USA}
\email{shelah@math.huji.ac.il}

\thanks{The third author was partially supported by ERC grant 338821.
The second and third author also thank NSF grant 1833363 for support.
This is DgHeSh:1171 in the third author's list of publications.}

\subjclass[2010]{Primary 13C05, 13C10, 20K20, 20K25, 20K35;
Secondary 03E05, 03E35}
\keywords{locally free groups, cotorsion groups, prediction principles}
\dedicatory{ }

\begin{abstract}
We prove in ZFC that an abelian group $C$ is cotorsion if and only if $\Ext(F,C) = 0$ for every $\aleph_k$-free group $F$,
and discuss some consequences and related results. This short note includes a condensed overview of the $\bar\lambda$-Black Box
for $\aleph_k$-free constructions in ZFC.
\end{abstract}

\maketitle

\tableofcontents

\section{Introduction}
In the theory of abelian groups, locally free groups and their properties have been the subject of extensive research.
In particular, for any given uncountable cardinal $\kappa$, we will call a group $G$ \emph{$\kk$-free} if every subgroup
$H\subseteq G$ of cardinality $|H|<\kk$ is free. One of the earliest and easiest examples \cite{B,Sp} of a non-free $\aleph_1$-free
group is the Baer-Specker group $\mathbb{Z}^\omega$, the cartesian product of countably infinitely many copies of the integers
$\mathbb{Z}$, and the cartesian product $\mathbb{Z}^\lambda$ is $\aleph_1$-free for any cardinal $\lambda$.
Apart from that, explicit examples of non-free $\kappa$-free groups are fairly difficult to come by and
require either some elaborate use of infinite combinatorics or of specific models of set theory. For instance, it is known that
every Whitehead group is $\aleph_1$-free \cite{St}, but the question whether non-free Whitehead groups exist is undecidable
and depends on the chosen model of set theory \cite{Ek,Sh44}. In G\"odel's Universe V=L, non-free $\kappa$-free groups
exist for all uncountable cardinals $\kappa$, and $\kappa$-free groups with prescribed properties are traditionally constructed
with help of the Jensen diamond principle $\diamondsuit$. Similarly, assuming only ZFC, the construction of $\aleph_1$-free
groups with various additional properties is possible utilizing Shelah's Black Box. See \cite{EM,GT} for some standard literature
on these constructions.

In contrast to this, hardly anything has been known about the existence of $\kappa$-free groups in ZFC for $\kappa>\aleph_1$.
Some first sporadic examples of non-free $\aleph_k$-free groups for integers $k\ge 2$ can be found in \cite{Gr,Hi},
however, the breakthrough in constructing $\aleph_k$-free groups with prescribed additional properties is more recent. In
\cite{GS3,S3}, $\aleph_k$-free groups with trivial dual were constructed, and \cite{GHS2} provides a construction for
$\aleph_k$-free groups with prescribed endomorphism rings. Similar constructions of $\aleph_k$-free groups and modules for
$k\ge 2$ can be found in \cite{GHSa,GSS,Habil,HSa} and are based on the $\bar\lambda$-Black Box as a guiding combinatorial principle.
For cardinals $\kappa \ge \aleph_\omega$, the situation concerning $\kappa$-free groups becomes considerably more complicated.
In~\cite{S4}, a construction for $\aleph_{\omega_1 \cdot k}$-free groups with trivial dual is provided for all integers $k\ge 1$, while
the nonexistence of $\aleph_{\omega_1 \cdot \omega}$-free groups with trivial dual is shown to be consistent with ZFC.

In this note we want to investigate the relation between $\kk$-free groups and cotorsion groups, where we call a group $C$
\emph{cotorsion} if $\Ext(F,C)=0$ for all torsion-free groups $F$. If $\mathfrak{F}$ and $\mathfrak{C}$ denote
the classes of torsion-free groups and cotorsion groups, respectively, then
\[\mathfrak{C}=\mathfrak{F}^\perp =\{ G \mid \Ext(F,G)=0 \mbox{ for all } F \in \mathfrak{F}\}\]
and
\[\mathfrak{F}= {}^\perp \mathfrak{C}=\{ G \mid \Ext(G,C)=0 \mbox{ for all } C \in \mathfrak{C}\}\]
holds, i.e., the pair of classes $(\mathfrak{F},\mathfrak{C})$ defines a \emph{cotorsion theory}. It should be
noted that a group $C$ is cotorsion if and only if  $\Ext(\Q,C)=0$ for the additive group of rationals $\Q$.
This is to say that $\Q$ is a \emph{cogenerator} of the cotorsion theory $(\mathfrak{F},\mathfrak{C})$. More generally,
we call a class $\mathfrak{F'} \subseteq \mathfrak{F}$ a \emph{cogenerating family} provided that any group $C$ is cotorsion
if and only if $\Ext(F,C)=0$ for all $F\in \mathfrak{F'}$. Therefore, $(\mathfrak{F},\mathfrak{C})$ is cogenerated by the
singleton $\{\Q\}$. Determining other cogenerating families for $(\mathfrak{F},\mathfrak{C})$ has been of interest and we note
in particular the following classical result~\cite{GP}.

\begin{theorem} \label{mainGP}
For any group $C$ the following statements are equivalent.
\begin{itemize}
\item[(i)] $C$ is cotorsion.
\item[(ii)] $\Ext(\Z^\lambda,C) = 0$ for some cardinal $\lambda$ with $\lambda^{\aleph_0}=2^\lambda \ge |C|$.
\end{itemize}
\end{theorem}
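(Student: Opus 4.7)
The implication (i)$\Rightarrow$(ii) is immediate: $\Z^\lambda$ is torsion-free, so any cotorsion $C$ satisfies $\Ext(\Z^\lambda,C)=0$ for every cardinal $\lambda$.

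For the substantive direction (ii)$\Rightarrow$(i), the plan is to pass through the cotorsion hull of $C$. Form the canonical short exact sequence
\[
0 \to C \to C^\bullet \to D \to 0,
\]
where $C^\bullet$ is cotorsion and $D$ is torsion-free divisible, so that $C$ is cotorsion if and only if $D=0$. Since $\Z^\lambda$ is torsion-free and $C^\bullet$ is cotorsion, we have $\Ext(\Z^\lambda,C^\bullet)=0$, and the long exact sequence of $\Hom(\Z^\lambda,-)$ collapses to
\[
\Hom(\Z^\lambda,C^\bullet) \to \Hom(\Z^\lambda,D) \to \Ext(\Z^\lambda,C) \to 0.
\]
Thus the hypothesis $\Ext(\Z^\lambda,C)=0$ is precisely the statement that every homomorphism $\Z^\lambda \to D$ lifts through $C^\bullet \to D$, and the goal reduces to: if $D\neq 0$, exhibit a $\varphi:\Z^\lambda \to D$ that does not lift.

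Assuming $D\neq 0$, the group $D$ contains a copy of $\Q$, and the plan is to produce enough candidate maps $\Z^\lambda \to \Q \hookrightarrow D$ to force one of them to fail lifting. Here I would exploit the rich supply of highly divisible elements in the quotient $\Z^\lambda/\Z^{(\lambda)}$, which is algebraically compact of large rank: for each countable $S\subseteq \lambda$, factorially-growing coefficients along $S$ yield elements of $\Z^\lambda$ whose class modulo $\Z^{(\lambda)}$ is divisible by every positive integer, each determining a nonzero homomorphism to $\Q$. The cardinal identity $\lambda^{\aleph_0}=2^\lambda$ furnishes $2^\lambda$ such countable subsets and hence an equally abundant supply of candidate maps. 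On the lifting side, the standard bound $|C^\bullet|\leq|C|^{\aleph_0}\leq(2^\lambda)^{\aleph_0}=2^\lambda$ keeps the target of comparable size, and a diagonalization across the $2^\lambda$ candidates should produce one that escapes every possible lift.

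The hard part is exactly this diagonalization, because a raw cardinality count on $\Hom(\Z^\lambda,C^\bullet)$ is far too coarse to separate lifts from non-lifts. The argument instead needs a Chase--Specker style rigidity, to the effect that homomorphisms from $\Z^\lambda$ into a torsion-free group of cardinality below the first measurable cardinal are essentially controlled by their restriction to a countable support. This rigidity, together with the finely tuned arithmetic $\lambda^{\aleph_0}=2^\lambda\geq|C|$, is what will allow one to actually separate the $2^\lambda$ candidate maps modulo the image of $\Hom(\Z^\lambda,C^\bullet)$ and push through the contradiction, forcing $D=0$ and hence $C$ cotorsion.
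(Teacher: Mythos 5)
The paper itself does not prove Theorem~\ref{mainGP}; it is quoted from G\"obel--Prelle~\cite{GP}, and the paper's own machinery (Theorems~\ref{cotcrit}, \ref{freethm}, \ref{bb} and Section~\ref{4}) is built to prove the $\aleph_k$-free analogue Theorem~\ref{main} and the Lemma of Section~\ref{5}. Measured against that method, your plan is a genuinely different route. The paper never forms the cotorsion hull: it extracts from non-cotorsionness an unsolvable system $x_n=(n+1)x_{n+1}+c_n$ via Theorem~\ref{cotcrit} and then \emph{constructs} a concrete non-split extension $0\to C\to G\to F\to 0$ whose generators encode the $c_n$, defeating every would-be splitting by a prediction principle. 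Your reduction to $0\to C\to C^\bullet\to D\to 0$, the reformulation that $\Ext(\Z^\lambda,C)=0$ iff every $\Z^\lambda\to D$ lifts through $C^\bullet$, and the estimate $|C^\bullet|\le|C|^{\aleph_0}\le 2^\lambda$ are all correct and are a legitimate alternative scaffolding.

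The gap is in the only load-bearing step, and the two tools you name will not carry it. First, a countable $S\subseteq\lambda$ with factorially growing coefficients produces an element of $\Z^\lambda$ divisible by every $n$ \emph{modulo} $\Z^{(\lambda)}$, i.e.\ an embedding of $\Q$ into $\Z^\lambda/\Z^{(\lambda)}$, not a homomorphism $\Z^\lambda\to\Q$; passing from the former to the latter requires choosing a retraction onto that copy of $\Q$, an uncontrolled, non-canonical choice, so you do not actually have in hand a $2^\lambda$-indexed family of candidate maps to diagonalise against. Second, the ``Chase--Specker style rigidity'' you invoke — that homomorphisms out of $\Z^\lambda$ are governed by countable (or finite) supports — is a theorem about \emph{slender} codomains, and neither $D\cong\Q^{(\kappa)}$ nor $C^\bullet$ is slender: $D$ is divisible, and a nonzero reduced cotorsion group is algebraically compact and already contains a copy of $\widehat{\Z}_p$. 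So $\Hom(\Z^\lambda,C^\bullet)$ is not controlled in the way your diagonalization requires, and the asserted contradiction is a genuine hole rather than a routine step. What actually closes it is a prediction lemma calibrated to $\lambda^{\aleph_0}=2^\lambda\ge|C|$ — precisely the role of the Easy Black Box and $\bar\lambda$-Black Box (Theorem~\ref{bb}) in the paper's Section~\ref{4}, and of the combinatorial argument in~\cite{GP}. You would need to import such a principle, or prove an equivalent, before the plan can become a proof.
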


In particular, with $\l_0=|C|$ and $\l_{i+1} =2^{\l_i}$, the cardinal $\l= \bigcup_{i<\om} \l_i$ satisfies the property
$\lambda^{\aleph_0}=2^\lambda \ge |C|$, and the class of $\aleph_1$-free groups is a cogenerating family for $(\mathfrak{F},\mathfrak{C})$.
In this note we would like to add the class of $\aleph_k$-free groups $(k\ge 1)$ as yet another cogenerating family,
thus providing additional evidence that in ZFC the class of $\aleph_k$-free groups is large and of a rich structure.

\begin{theorem}[ZFC] \label{main}
Let $k\ge 1$ be some integer. Then the following statements are equivalent for any group $C$.
\begin{itemize}
\item[(i)] $C$ is cotorsion.
\item[(ii)] $\Ext(F,C) = 0$ for all $\aleph_k$-free groups $F$.
\end{itemize}
\end{theorem}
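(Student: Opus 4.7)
The implication $(i) \Rightarrow (ii)$ is immediate since every $\aleph_k$-free group is torsion-free.

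For the converse I would argue by contrapositive. Suppose $C$ is not cotorsion. By Theorem~\ref{mainGP}, every $\lambda$ with $\lambda^{\aleph_0}=2^\lambda\ge|C|$ satisfies $\Ext(\Z^\lambda,C)\neq 0$; fix such a $\lambda$ and a non-split short exact sequence $0\to C\to X\to\Z^\lambda\to 0$. It then suffices to construct an $\aleph_k$-free group $F$ together with a surjection $\pi:F\twoheadrightarrow\Z^\lambda$ for which the pulled-back class $\pi^*[X]\in\Ext(F,C)$ is nonzero, since this witnesses $\Ext(F,C)\neq 0$ and contradicts hypothesis (ii). Note that one cannot simply take $F=\Z^\lambda$ when $k\ge 2$: under CH already $\Z^\omega$ (of cardinality $\aleph_1$) is $\aleph_1$-free but not free, so $\Z^\lambda\supseteq\Z^\omega$ fails to be $\aleph_2$-free, and a genuinely new $F$ must be built.

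The plan is to apply the $\bar\lambda$-Black Box in the template of the $\aleph_k$-free constructions in \cite{GHS2, GHSa, Habil}. Starting from the free base $B=\bigoplus_{\alpha<\lambda}\Z e_\alpha$ together with the natural inclusion $B=\Z^{(\lambda)}\hookrightarrow\Z^\lambda$, I would build $F$ by iteratively adjoining formal infinite $\Z$-linear combinations $y_\eta=\sum_n b_n e_{\alpha_n^\eta}$ indexed by branches $\eta$ of the Black Box prediction tree, simultaneously extending the map to a surjection $\pi:F\twoheadrightarrow\Z^\lambda$. The branching conditions of the $\bar\lambda$-Black Box guarantee the $\aleph_k$-freeness of $F$. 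At each prediction step the Black Box supplies a candidate partial lift $\varphi$ of a hypothetical section of the pullback extension $X\times_{\Z^\lambda}F\to F$; the coefficients $b_n$ of the new branch element $y_\eta$ are then chosen, using the non-splitness of $X\to\Z^\lambda$ and the arithmetic flexibility of lifting through $p:X\to\Z^\lambda$, so as to preclude any completion of $\varphi$ to a global section. A Step Lemma of the kind developed in the cited references then verifies that no global section of the pullback extension exists in the limit, i.e.\ $\pi^*[X]\neq 0$.

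The principal obstacle is the customary tension inherent in Black Box constructions: $\aleph_k$-freeness restricts how and where the branch elements $y_\eta$ may be adjoined and with what supports, whereas the killing of every potential splitting demands a sufficiently dense prediction system. The $\bar\lambda$-Black Box is precisely the combinatorial instrument designed to arbitrate this tension, and the technical bulk of the proof would consist of coding partial splittings of the pullback extension as Black Box predictions and executing the corresponding Step Lemma analysis in the present setting.
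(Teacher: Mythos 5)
Your implication $(i) \Rightarrow (ii)$ and your high-level strategy for the converse (contrapositive, $\bar\lambda$-Black Box, construct an $\aleph_k$-free $F$ together with a non-split extension by $C$) match the paper. But your intermediate routing via $\Z^\lambda$ is genuinely different, and it leaves two substantial gaps that the paper's actual route is designed to avoid. First, you need a surjection $\pi\colon F \twoheadrightarrow \Z^\lambda$ from an $\aleph_k$-free $F$ onto $\Z^\lambda$. The Black Box modules of Section~\ref{3} live inside the $\Z$-adic completion $\Bhat$ of a free base $B=\bigoplus_{\nubar\in\Lambda_*}\Z e_\nubar$; there is no natural surjection of such an $M$ onto $\Z^\lambda$, and constructing one (while keeping $\aleph_k$-freeness) is an unaddressed extra problem. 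Second, even given $\pi$, you must show $\pi^*[X]\ne 0$; your appeal to ``arithmetic flexibility of lifting through $p\colon X\to\Z^\lambda$'' and an unspecified ``Step Lemma'' gives the prediction machinery no concrete target to aim at.

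The paper's key move, which you are missing, is to factor through $\Q$ rather than $\Z^\lambda$. Theorem~\ref{cotcrit} (Section~\ref{2}) shows that $C$ fails to be cotorsion exactly when there are $c_n\in C$ making the system $x_n=(n+1)x_{n+1}+c_n$ unsolvable in $C$; this is an elementary re-encoding of a non-split extension $0\to C\to G\to\Q\to 0$. This unsolvable system is the concrete obstruction the Black Box can be aimed at, and it matches the structure of the branch elements $y_\etabar=\sum_n n!\,(\cdots)$, whose cosets modulo $B$ are rank-$1$ divisible in $F_C/B$. The extension $G_C$ is then built directly inside $\Bhat\oplus\bar C$ (with $\bar C=C^\omega/C^\omega_{\operatorname{fin}}$ a gimmick to avoid the failure of $C$ to embed in its $\Z$-adic completion), incorporating both the $c_n$ and the Black Box predictions $\varphi_\etabar$ into corrected branch elements $z_{\etabar i}$. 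A hypothetical splitting $\psi\colon F_C\to G_C$ induces a map $\va\colon\Lambda_*\to C$, $\va(\nubar)=e_\nubar-\psi(e_\nubar)$; the prediction principle catches a branch $\etabar$ with $\va_\etabar\subseteq\va$, and then $x_n:=z_{\etabar n}-\psi(y_{\etabar n})\in C$ is forced to solve the unsolvable system, a contradiction. No reference group $\Z^\lambda$, no surjection, and no separate Step Lemma are needed. Without the reduction provided by Theorem~\ref{cotcrit}, your sketch cannot be completed as stated.
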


Notably, given any group $C$ that fails to be cotorsion, we will construct an $\aleph_k$-free group $F_C$ with
$\Ext(F_C,C) \ne 0$. To this end, Section \ref{2} provides an easy criterion for cotorsionness,
while Section \ref{3} reviews the $\bar\lambda$-Black Box. The final construction of $F_C$ is presented in Section \ref{4},
while Section \ref{5} provides an $\aleph_k$-free analog of Theorem~\ref{mainGP}.

It should be noted that the given argument easily adapts to other combinatorial principles, like the Jensen diamond
$\diamondsuit$, and we make a passing mention of the corresponding result.

\begin{corollary}[V=L]
Let $\kappa$ be some uncountable cardinal. Then the following statements are equivalent for any group $C$.
\begin{itemize}
\item[(i)] $C$ is cotorsion.
\item[(ii)] $\Ext(F,C) = 0$ for all $\kappa$-free groups $F$.
\end{itemize}
\end{corollary}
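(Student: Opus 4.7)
The direction (i) $\Rightarrow$ (ii) is immediate since every $\kk$-free group is torsion-free. For the converse I argue by contraposition: given a group $C$ that fails to be cotorsion, I want to produce a $\kk$-free group $F_C$ with $\Ext(F_C,C)\neq 0$. The criterion of Section~\ref{2} supplies a concrete countable obstruction inside $C$ (a coherent sequence admitting no compatible lift), which is exactly the input that the main ZFC argument of Section~\ref{4} feeds into the $\bar\lambda$-Black Box. Under V=L the plan is to replace that combinatorial engine by Jensen's diamond at an appropriate cardinal.

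Concretely, I would fix a regular uncountable cardinal $\lambda\geq\kk$ (take $\lambda=\kk$ when $\kk$ is regular, and $\lambda=\kk^+$ otherwise) together with a non-reflecting stationary set $E\subseteq\{\alpha<\lambda:\cf(\alpha)=\aleph_0\}$; in V=L both $E$ and the corresponding instance of $\diamondsuit_\lambda(E)$ are available. I would then construct a continuous filtration $(F_\alpha)_{\alpha<\lambda}$ of free groups with $F_0=0$ and with $F_{\alpha+1}/F_\alpha$ free of rank one whenever $\alpha\notin E$, and set $F_C=\bigcup_{\alpha<\lambda}F_\alpha$. At stages $\alpha\in E$ the diamond predicts a homomorphism $\phi_\alpha:F_\alpha\to C$ that could conceivably witness a splitting; I would then choose generators of $F_{\alpha+1}$ along a cofinal $\omega$-sequence in $\alpha$ so that the countable obstruction drawn from $C$ prevents any such $\phi_\alpha$ from extending. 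A standard diamond-diagonalization then forces $\Ext(F_C,C)\neq 0$.

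The main obstacle is verifying $\kk$-freeness of $F_C$. Since the filtration is continuous and $F_{\alpha+1}/F_\alpha$ is free for $\alpha\notin E$, the set of $\alpha<\lambda$ at which $F_\alpha$ fails to be a free direct summand of $F_{\alpha+1}$ is contained in $E$; consequently any $H\subseteq F_C$ with $|H|<\kk$ sits inside a free $F_\beta$, provided $E$ does not reflect below $\kk$. In V=L this non-reflection is automatic when $\kk$ is regular and can be arranged using Jensen's $\square_\mu$ sequences at the relevant cardinals $\mu<\lambda$ when $\kk$ is singular; this singular case is where the bookkeeping most closely parallels the role of the $\bar\lambda$-Black Box in the main ZFC construction and is where the real work lies. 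Once $H$ is captured inside a free $F_\beta$, freeness of $H$ itself follows by the standard Eklof-style argument, yielding the $\kk$-freeness of $F_C$ and completing the proof.
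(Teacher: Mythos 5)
The paper offers no proof of this corollary---it is a ``passing mention''---so there is nothing to compare against directly; your filtration-plus-$\diamondsuit$ plan is the natural V=L analogue of the Black Box argument in Section~\ref{4} and the overall shape is right. Two concrete points need fixing before this is a proof, and one framing remark is off.

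First, the choice $\lambda=\kk$ (for $\kk$ regular) omits the dependence on $|C|$. The diamond at $\lambda$ must predict maps whose target involves $C$: in the Section~\ref{4} argument the predicted object is the function $\nubar\mapsto e_\nubar-\psi(e_\nubar)\in C$, and the cardinal $\lambda_1$ is chosen there to dominate $|C|$ for exactly this reason. If $|C|>\lambda$ there is no way to enumerate the candidate restrictions as subsets of $\lambda$, and $\diamondsuit_\lambda(E)$ gives you nothing. You must take $\lambda\geq|C|$. Second, $\lambda=\kk$ fails outright if $\kk$ is regular but weakly compact: in V=L (and in ZFC) every stationary subset of a weakly compact cardinal reflects, so no non-reflecting stationary $E\subseteq\kk$ exists, and indeed every $\kk$-free group of size $\kk$ is then free, so the diagonalization cannot be carried out at $\lambda=\kk$. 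Both problems disappear at once if you simply take $\lambda=\max(\kk,|C|)^+$ in all cases: this is a successor cardinal, hence regular and never weakly compact; in V=L the predecessor's $\square$ gives a non-reflecting stationary $E\subseteq\lambda\cap\operatorname{cof}(\omega)$ with $\diamondsuit_\lambda(E)$; the resulting $F_C$ is $\lambda$-free, hence $\kk$-free, with $\Ext(F_C,C)\neq0$. (Your phrase ``provided $E$ does not reflect below $\kk$'' should really be ``$E$ is non-reflecting in $\lambda$'': the subgroup $H$ of size $<\kk$ may live inside $F_\gamma$ for an ordinal $\gamma$ well above $\kk$, and you need $F_\gamma$ free, which requires non-reflection at all $\gamma<\lambda$, not merely below $\kk$.)

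Finally, the remark that the singular case ``is where the real work lies'' and ``most closely parallels the role of the $\bar\lambda$-Black Box'' is not accurate. Once you pass to the successor $\lambda=\kk^+$, Jensen's $\square_\kk$ hands you the non-reflecting $E$ and the rest is the unchanged $\diamondsuit$-diagonalization; there is no analogue of the delicate multi-level bookkeeping that the $\bar\lambda$-Black Box needs to reach $\aleph_k$-freeness in ZFC. In V=L the whole construction is genuinely easier, uniformly across regular, singular, and weakly compact $\kk$, which is precisely why the paper relegates it to a one-line corollary.
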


\section*{Acknowledgement}

We would like to thank Jan Trlifaj for bringing this problem to our attention.

\section{A characterization of cotorsion groups} \label{2}

The following criterion distinguishes between cotorsion groups and such groups that fail to be cotorsion in ways that
can be interpreted combinatorially. This will provide us later on with a useful foothold for applying the $\bar\lambda$-Black Box.

\begin{theorem} \label{cotcrit}
For any group $C$ the following statements are equivalent.
\begin{itemize}
\item[(i)] $\Ext(\Q, C) \not= 0$.
\item[(ii)] There exist elements $c_n \in C$ $(n\in \Z^{\ge 0})$
such that the infinite system of linear equations
\[x_n=(n+1)x_{n+1}+c_n\]
is not solvable in $C$.
\end{itemize}
\end{theorem}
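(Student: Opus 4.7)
The plan is to give a direct homological computation of $\Ext(\Q,C)$ using an explicit free resolution of $\Q$ whose generators and relations mirror the equations in (ii) exactly.

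First, I will set up the resolution. Let $F = \bigoplus_{n \ge 0} \Z e_n$ be the free abelian group on generators $e_n$, and define $\pi \colon F \to \Q$ by $e_n \mapsto 1/n!$. Since every rational has the form $a/n!$, the map $\pi$ is surjective, and a short check shows that $R := \Ker \pi$ is generated by the elements $r_n := e_n - (n+1)e_{n+1}$; the key point is that modulo these relations every element of $F$ can be pushed into $\Z e_N$ for arbitrarily large $N$, and $\pi$ is injective on each $\Z e_N$. A straightforward linear-independence check (comparing coefficients of $e_0$ and telescoping upward) shows that the $r_n$ are actually a basis of $R$. Thus
\[
0 \to R \to F \to \Q \to 0
\]
is a free resolution of $\Q$.

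Next, apply $\Hom(-,C)$. Since $F$ is free, the long exact sequence collapses to
\[
\Hom(F,C) \xrightarrow{\;\partial\;} \Hom(R,C) \to \Ext(\Q,C) \to 0.
\]
Using the bases $\{e_n\}$ and $\{r_n\}$, I identify $\Hom(F,C) \cong \prod_{n} C$ via $\psi \mapsto (\psi(e_n))_n = (y_n)_n$ and similarly $\Hom(R,C) \cong \prod_n C$ via $\varphi \mapsto (\varphi(r_n))_n$. The connecting map $\partial$ is then computed directly: $\partial(\psi)(r_n) = \psi(e_n) - (n+1)\psi(e_{n+1}) = y_n - (n+1)y_{n+1}$. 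Hence
\[
\Ext(\Q,C) \;\cong\; \prod_{n \ge 0} C \;\Big/\; \bigl\{\, (y_n - (n+1)y_{n+1})_n \,:\, (y_n) \in \textstyle\prod_n C \bigr\}.
\]

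Finally, I translate. Condition (i) says this cokernel is nonzero, i.e.\ there exists a sequence $(c_n) \in \prod_n C$ that is not of the form $(y_n - (n+1)y_{n+1})_n$ for any choice of $y_n \in C$. Setting $x_n := y_n$, this is precisely the statement that the system $x_n = (n+1)x_{n+1} + c_n$ has no solution in $C$, which is condition (ii). Both implications are obtained by the same identification, so the equivalence follows.

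The only real obstacle is the verification that the $r_n$ really form a basis of $\Ker \pi$; everything else is a bookkeeping identification of the $\partial$-map with the difference operator $(y_n) \mapsto (y_n - (n+1)y_{n+1})$. I expect this verification, together with the passage from elements of $F$ to their ``normal forms'' in $\Z e_N$ modulo $R$, to be the one step that requires a careful argument rather than a symbolic identification.
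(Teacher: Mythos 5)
Your proof is correct, and it takes a genuinely different route from the paper's. You compute $\Ext(\Q,C)$ directly as a cokernel, by exhibiting the explicit free resolution $0 \to R \to F \to \Q \to 0$ with $F=\bigoplus_n \Z e_n$, $\pi(e_n)=1/n!$, and $R$ free on $r_n=e_n-(n+1)e_{n+1}$; applying $\Hom(-,C)$ then identifies $\Ext(\Q,C)$ with the cokernel of the map $(y_n)_n \mapsto (y_n-(n+1)y_{n+1})_n$ on $\prod_n C$, from which both directions of the equivalence fall out simultaneously. (Your verification that the $r_n$ generate $\Ker\pi$ is sound: modulo $\langle r_n\rangle$ every element of $F$ reduces to $me_N$ for $N$ large, and $\pi$ is injective on $\Z e_N$; independence follows from the $e_0$-coefficient and telescoping, or from the top coefficient downward.) The paper instead argues symmetrically at the level of short exact sequences: given a non-split extension of $\Q$ by $C$ it lifts $1/n!$ to $g_n$ and reads off the $c_n$ from $g_n-(n+1)g_{n+1}$, and conversely, given an unsolvable system it constructs an explicit pushout-type extension $G=(C\oplus\bigoplus\Z y_n)/U$ and checks it does not split. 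Your approach is more systematic and shorter once the resolution is in hand, and it makes the equivalence a single bookkeeping identification rather than two separate arguments; the paper's approach is more elementary and self-contained, avoiding the long exact sequence of $\Ext$ and working only with splittings of explicit extensions, which fits the paper's hands-on style in Section 4 where the analogous non-split extension of $F_C$ by $C$ is built by the same recipe.
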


\begin{proof}
For (i) implies (ii), let us consider some group $C$ with $\Ext(\Q, C) \not= 0$. Thus, there exists some short exact sequence
\[
\begin{tikzcd}
0 \ar[r] & C \ar[r] & G \ar[r,"\varphi"] & \Q \ar[r] & 0
\end{tikzcd}
\]
which fails to split. As usual, we will interpret $C$ as a subgroup of $G$. For $n\ge 0$ choose some $g_n\in G$ with
$\varphi(g_n)= \frac 1{n!}$. Then $\varphi(g_n)=\varphi((n+1)g_{n+1})$, and there exist $c_n \in C = \Ker \varphi$ with
\[g_n=(n+1)g_{n+1}+c_n.\]
We claim that the corresponding infinite system of equations
\[x_n=(n+1)x_{n+1}+c_n\]
has no solution in $C$. Towards a contradiction let us for the moment assume the existence of such a solution
$(x_n\mid n\in \Z^{\ge 0})$ with $x_n \in C \subseteq G$. Then $g_n-x_n \in G$ with $\varphi(g_n-x_n) = \varphi(g_n)= \frac 1{n!}$ and
\[g_n-x_n = (n+1)(g_{n+1}-x_{n+1}).\]
Thus, $\psi(\frac 1{n!}):=g_n-x_n$ defines a homomorphism $\psi: \Q \to G$ with $\varphi \circ \psi = \id_\Q$, and the
short exact sequence splits, contradicting our choice.

For (ii) implies (i), let $c_n \in C$ $(n\in \Z^{\ge 0})$ be a set of elements such that the corresponding system of equations
\[x_n=(n+1)x_{n+1}+c_n\]
is not solvable in $C$. For a set of free generators $y_n$ $(n\in \Z^{\ge 0})$, we define the groups
\[ U= \big\langle y_n-(n+1)y_{n+1}-c_n\mid n\ge 0 \big\rangle \subseteq C\oplus \bigoplus_{n\ge 0} \Z y_n \]
and
\[ V= \big\langle y_n-(n+1)y_{n+1}\mid n\ge 0 \big\rangle \subseteq \bigoplus_{n\ge 0} \Z y_n. \]
It is readily observed that $C$ embeds into $G :=\big(C\oplus \bigoplus_{n\ge 0} \Z y_n\big)/U$ canonically via $c\mapsto c+U$.
Furthermore, $H := \big(\bigoplus_{n\ge 0} \Z y_n\big)/V \cong \Q$, and the canonical projection
\[\pi: C\oplus \bigoplus_{n\ge 0} \Z y_n \to \bigoplus_{n\ge 0} \Z y_n\]
induces a homomorphism $\bar\pi: G\to H$ with $\bar\pi(y_n+U)=y_n+V$ and $\bar\pi(c+U)=0$.
Using the fact that every element of $G$ can be represented in the form
$(c+zy_m)+U$ for suitable $c\in C$, $z\in \Z$, and $m\ge 0$, we can check $\Ker \bar\pi =C$. Summarizing, we have the short
exact sequence
\[
\begin{tikzcd}
0 \ar[r] & C \ar[r] & G \ar[r,"\bar\pi"] & H\cong \Q \ar[r] & 0,
\end{tikzcd}
\]
and we claim that this exact sequence does not split. Towards a contradiction let us for the moment assume the existence
of a splitting homomorphism $\psi: H \to G$ with $\bar\pi \circ \psi = \id_H$. We then have
\[(y_n+U) - \psi(y_n+V) \in \Ker \bar\pi =C,\]
and with $x_n := (y_n+U) - \psi(y_n+V) \in C$ holds
\begin{eqnarray*}
x_n-(n+1)x_{n+1} & = & \big(y_n-(n+1)y_{n+1}+U\big) - \psi\big(y_n-(n+1)y_{n+1}+V\big)\\
& = & (c_n+U) - \psi(0+V) = c_n+U
\end{eqnarray*}
in $G$. From this we infer $x_n=(n+1)x_{n+1}+c_n$ in $C\subseteq G$, contradicting (ii). Hence, the aforementioned
exact sequence does not split, and $\Ext(\Q, C) \not= 0$ follows.
\end{proof}

\section{The $\bar\lambda$-Black Box} \label{3}

We recall the basics of the $\bar\lambda$-Black Box, keeping this exposition rather short with the
intention of providing a fast and simple reference for future $\aleph_k$-free constructions in ZFC.
The proofs of Lemma \ref{freeprop} and Theorem \ref{freethm} can be skipped for faster access.
The reader may consult \cite{GS3,Habil,HSa} for further details and any left out proofs.

\subsection{$\Lambda$ and $\Lambda_*$} \label{3.1}

Throughout this section, we will employ some standard notations from set theory. In particular,
we will identify $0=\emptyset$, $n =\{0,\ldots,n-1\}$ for every positive integer $n$, and
$\alpha =\{\beta \mid \beta < \alpha \}$ for every ordinal $\alpha$. Let
$\omega =\{0,1,2,\ldots\}$ denote the first infinite ordinal. Ordinals will be assigned letters
$\alpha$, $\beta$, while cardinals will be assigned letters $\kappa$, $\lambda$.

\begin{notation}
Let $^\omega\lambda$ denote the set of all functions $\tau:\omega\to\lambda$, while $^{\omega\uparrow}\lambda$ is the subset of $^\omega\lambda$ consisting of all strictly increasing
functions $\eta:\omega\to\lambda$, namely
     \[
     ^{\omega\uparrow}\lambda=\{\eta:\omega\to\lambda \mid \eta(m)< \eta(n) \text{ for all } m<n\}.
     \]
     Similarly, $^{\omega>}\lambda$ denotes the set of all functions $\sigma:n\to\lambda$ with $n<\omega$, while $^{\omega\uparrow>}\lambda$ is the subset of $^{\omega>}\lambda$ consisting of
     all strictly increasing functions $\eta:n\to\lambda$ with $n<\omega$.
\end{notation}

For some integer $k\ge 1$, let  $\bar\lambda=\langle \lambda_1,\ldots,\lambda_{k}\rangle$ be a finite increasing sequence of infinite cardinals with the following properties:
\begin{enumerate}
\item[(i)] $\lambda^{\aleph_0}_1=\lambda_1$.
\item[(ii)] $\lambda^{\lambda_m}_{m+1}=\lambda_{m+1}$ for all $1\le m < k$.
\end{enumerate}
In particular, the sequence $\bar\lambda=\langle \beth_1,\ldots, \beth_k\rangle$ is an example and constitutes the smallest possible choice for $\bar\lambda$.

We associate with $\bar\lambda$ two sets
$\Lambda$ and $\Lambda_*$. Let
\[\Lambda= \lup_1\times \ldots \times \lup_k.\]
For the second set we replace the $m$-th (and only the $m$-th)
coordinate $\lup_m$ by $\lupf_m$, thus let
\[ \Lambda_{m*} = \lup_1\times \ldots \times   \lupf_m \times \ldots \times  \lup_{k} \text{ for } 1\le m\le k \text{ and }
\Lambda_*=\bigcup_{1\le m\le k}\Lambda_{m*}.\]

The elements of $\Lambda, \Lambda_*$ will be written as sequences $\etabar=(\eta_1,\dots,\eta_k)$ with $\eta_m\in \lup_m$ or $\eta_m \in \lupf_m$, respectively.
With each member of $\etabar\in \Lambda$ we associate some elements of $\Lambda_*$ which result from restricting the length of one of the entries $\eta_m \in \lup$ of $\etabar$.

\begin{definition}\label{support}
If $\etabar = (\eta_1,\dots,\eta_k)\in \Lambda$ and $1\le m\leq k,n<\om$, then let
$\etabar\restl\langle m,n\rangle$ be the following element of
$\Lambda_{m*} \subseteq\Lambda_*$
\[
(\etabar\restl\langle m,n\rangle)_l =
\begin{cases}
 \ \eta_l & \text{ if }\  m\ne l \le k,\\
 \ \eta_m\restr n & \text{ if }\ l = m.
\end{cases}
\]
We associate with $\etabar$ its {\em support}
\[ [\etabar] =\{\etabar\restl\langle m,n\rangle \mid1\le  1\le m\le k, n < \om\}\] which is a
countable subset of $\Lambda_*$.
\end{definition}

\subsection{The modules} \label{3.2}

Let $R$ be a commutative ring with $1$ and let $\bS \subseteq R\setminus \{0\}$ be a countable multiplicatively closed subset. We introduce the following basic concepts.

\begin{definition} \mbox{}
\begin{enumerate}
\item[(a)] An $R$-module $M$ is \emph{$\mathbb{S}$-torsion-free} if $sm = 0$ for $s\in\mathbb{S}$, $m\in M$ implies $m = 0$.
\item[(b)] An $R$-module $M$ is \emph{$\mathbb{S}$-reduced} if $\bigcap_{s\in\mathbb{S}}sM=0$.
\item[(c)] The ring $R$ is an \emph{$\mathbb{S}$-ring} if $R$ as an $R$-module is $\mathbb{S}$-torsion-free and $\mathbb{S}$-reduced.
\item[(d)] Let $M$ be an $R$-module. A submodule $N\subseteq M$ is \emph{$\mathbb{S}$-pure} if $N\cap sM=sN$ for all $s\in\mathbb{S}$. We write $N\subseteq_* M$.
\item[(e)] Let $M$ be an $\mathbb{S}$-torsion-free $R$-module, and let $T$ be a subset of $M$. Then $\langle T\rangle_*$ will denote the smallest $\mathbb{S}$-pure
submodule of $M$ containing $T$.
\end{enumerate}
\end{definition}

In the following, $R$ will always denote an $\mathbb{S}$-ring. Furthermore, we enumerate $\bS=\{s_i \mid i< \omega\}$
and put $q_n =\prod_{i< n}s_i$; thus, $q_0=1$ and $q_{n+1} = q_n s_{n}$.
The $\bS$-topology on $R$, generated by the basis $sR$ $(s\in \bS)$ of neighbourhoods of $0$, is Hausdorff
and we can consider the $\bS$-completion $\Rhat$ of $R$. Note $R\subseteq_* \Rhat$, and see \cite{GT} for further basic facts on $\Rhat$.

\begin{remark}
The case $R =\Z$ presents us with two canonical options for $\bS$.
\begin{enumerate}
\item [(i)]  For any prime $p$, the choice $\bS=\{p^i | i\in \Z^{\ge 0}\}$ gives the $p$-adic topology.
\item [(ii)] The choice $\bS=\Z^{>0}$ gives the $\Z$-adic topology.
\end{enumerate}
\end{remark}

The choice of $R$-modules is the most flexible part of the $\bar\lambda$-Black Box and very much depends on the respective goals of the final construction.
Here we will present only one simple generic example to discuss some of the more common features of $\bar\lambda$-Black Box constructions. In particular, it should
be noted that the following general statement will be responsible for $\aleph_k$-freeness of the constructed $R$-modules, where $\mathcal{P}^{\operatorname{fin}}(T)$
denotes the set of all finite subsets of a given set $T$.

\begin{lemma}[{\cite[Proposition 3.5]{Habil}}]\label{freeprop}
Let $F: \Lambda \to \mathcal{P}^{\operatorname{fin}}(\Lambda_*)$ be any function, $1\le f\le k$ and
$\Omega$ a subset of $\Lambda$ of cardinality $\aleph_{f-1}$ with a family of sets
$u_\etabar\subseteq \{1,\dots,k\}$ satisfying $\size{u_\etabar}\ge f$ for all $\etabar\in \Omega$.
Then we can find an enumeration $\langle
\etabar^\a\mid \a<\aleph_{f-1}\rangle$  of $\Omega$, $\ell_\a \in
u_{\etabar^\a}$ and $ n_\a <\omega$ $(\a<\aleph_{f-1})$ such that
\[\etabar^\a\restl \langle \ell_\a,n\rangle\notin \{\etabar^\b\restl \langle \ell_\a,n\rangle \mid\b< \a\}
\cup \bigcup \big\{ F(\etabar^\b) \mid \b\le \alpha\big\} \text{ for all } n\ge n_\a.\]
\end{lemma}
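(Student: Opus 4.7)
I would prove this by induction on $f\ge 1$, with the base case $f=1$ being a direct finiteness argument and the inductive step a blockwise application of the inductive hypothesis along a carefully chosen filtration of $\Omega$.

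For $f=1$ the set $\Omega$ is countable, say $\Omega = \{\etabar^\a : \a<\omega\}$ in any enumeration, and any choice $\ell_\a\in u_{\etabar^\a}$ works. At stage $\a$ only finitely many predecessors are present and the obstruction $\bigcup_{\b\le\a} F(\etabar^\b)$ is finite. For any fixed $\nubar\in\Lambda_*$ the equation $\etabar^\a\restl\langle\ell_\a,n\rangle=\nubar$ forces $n$ to equal the length of the distinguished coordinate of $\nubar$, hence has at most one solution in $n$. Similarly, for each $\b<\a$ with $\etabar^\b\ne\etabar^\a$, the identity $\etabar^\a\restl\langle\ell_\a,n\rangle=\etabar^\b\restl\langle\ell_\a,n\rangle$ either fails identically (when they disagree in some coordinate $j\ne\ell_\a$) or only for $n$ at most the branching level of $\eta^\a_{\ell_\a}$ and $\eta^\b_{\ell_\a}$. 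So only finitely many $n$ are forbidden, and $n_\a$ is chosen above them all.

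For the inductive step from $f-1$ to $f$, take $\Omega$ of size $\aleph_{f-1}$ with $|u_\etabar|\ge f$. Since $\aleph_{f-1}$ is regular I would build a continuous increasing filtration $\Omega = \bigcup_{\gamma<\aleph_{f-1}}\Omega_\gamma$ with $|\Omega_\gamma|\le\aleph_{f-2}$, together with an assignment $\etabar\mapsto\ell_\etabar\in u_\etabar$ chosen so that, writing $\etabar\sim_\ell\etabar'$ for "$\eta_j=\eta'_j$ for all $j\ne\ell$", the following holds for every $\etabar\in\Omega_{\gamma+1}\setminus\Omega_\gamma$: only finitely many members of $\Omega_\gamma$ are $\sim_{\ell_\etabar}$-related to $\etabar$, and the entire $\sim_{\ell_\etabar}$-class of $\etabar$ in $\Omega$ is already contained in $\Omega_{\gamma+1}$. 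I would then apply the inductive hypothesis blockwise to each $\Omega_{\gamma+1}\setminus\Omega_\gamma$, using a restricted choice set $u'_\etabar\subseteq u_\etabar$ of size $\ge f-1$ that avoids coordinates already blocked by $\Omega_\gamma$, and with $F$ enlarged by the finite cross-block collision obstructions. Concatenating the block enumerations in the order of $\gamma$ yields the required global enumeration, and the choice of $n_\a$ within each block simultaneously absorbs the finite intra- and cross-block collisions.

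The main obstacle is the construction of the filtration together with the assignment $\etabar\mapsto\ell_\etabar$. One needs to show that at every stage $\gamma$ the collection of $\etabar\in\Omega\setminus\Omega_\gamma$ whose $\sim_\ell$-class meets $\Omega_\gamma$ in an infinite set for all but at most $f-1$ coordinates $\ell\in u_\etabar$ has cardinality at most $\aleph_{f-2}$, so that it can be absorbed into the next block without exceeding the size bound. This uses the hypothesis $|u_\etabar|\ge f$ (leaving at least $f-1$ candidate coordinates free for the inductive-hypothesis application), the cardinal arithmetic $\lambda_m^{\lambda_{m-1}}=\lambda_m$ prescribed on $\bar\lambda$ (which controls the number of possible projections of $\Omega$ onto any $(k-1)$-subset of coordinates and hence bounds the number of large mate classes), and the regularity of the successor cardinal $\aleph_{f-1}$ (used to close the filtration continuously). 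The most delicate piece of bookkeeping is verifying that the concatenated enumeration preserves the required inequality across block boundaries, and not merely within a single block.
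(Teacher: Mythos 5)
Your overall skeleton (induction on $f$, filter $\Omega$ into blocks of one cardinal lower, apply the induction hypothesis blockwise with a pruned choice set, concatenate) matches the paper, and the base case $f=1$ is correct. But the inductive step has a genuine gap, and it lies exactly where you flag the "main obstacle."

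The condition to be secured is $\etabar^\a\restl\langle\ell_\a,n\rangle\notin\{\etabar^\b\restl\langle\ell_\a,n\rangle\mid\b<\a\}\cup\bigcup\{F(\etabar^\b)\mid\b\le\a\}$, and across block boundaries the second part is the hard one: if $\etabar^\a$ lies in block $D_\gamma$, the set $\bigcup\{F(\etabar^\b)\mid\etabar^\b\in\Omega_\gamma\}$ is a union of finitely many obstructions over a set $\Omega_\gamma$ that is generally infinite, hence is itself infinite. Your plan to "enlarge $F$ by the finite cross-block collision obstructions" therefore does not go through; nor is there any $n_\a$ that can absorb infinitely many forbidden values $\etabar^\a\restl\langle\ell_\a,n\rangle$. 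Your $\sim_\ell$-class analysis only ever addresses the first part of the obstruction (coincidence of restrictions); it says nothing about membership in $F(\etabar^\b)$ for earlier $\etabar^\b$. The paper avoids this entirely by choosing the filtration to be \emph{closed} in a sense that bundles both obstructions together: for all $\nubar,\nubar'\in\Omega_\delta$ and $\etabar\in\Omega$, if every coordinate of $\etabar$ occurs among the coordinates of $\nubar$, $\nubar'$, or of elements of $F(\nubar)\cup F(\nubar')$, then $\etabar\in\Omega_\delta$. This closure (achievable because each pair $\nubar,\nubar'$ only forces finitely many $\etabar$ in) implies that for $\etabar\in D_\delta$ the set of coordinates $\ell$ on which some $\nubar\in\Omega_\delta$ produces a collision \emph{of either type} is at most a singleton: two distinct bad coordinates would put all of $\etabar$'s coordinates in the closure data of a single pair $(\nubar,\nubar')$ and hence force $\etabar\in\Omega_\delta$, a contradiction. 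Then pruning that one coordinate leaves $|u'_\etabar|\ge f$ free coordinates on which there are \emph{no} cross-block obstructions at all, so nothing needs to be absorbed by $n_\a$ and the induction hypothesis applies verbatim inside each block. This "at most one bad coordinate per element, by closure" step is the key idea you are missing.

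Two further remarks. First, the filtration you postulate, in which the entire $\sim_{\ell_\etabar}$-class in $\Omega$ fits inside $\Omega_{\gamma+1}$, need not exist: that class can have cardinality up to $|\Omega|=\aleph_{f-1}$, which overflows the prescribed block size $\aleph_{f-2}$. Second, your appeal to the cardinal arithmetic $\lambda_{m+1}^{\lambda_m}=\lambda_{m+1}$ is a red herring here: the paper's remark following Theorem~\ref{freethm} explicitly states that Lemma~\ref{freeprop} holds for \emph{any} choice of infinite cardinals $\lambda_1,\dots,\lambda_k$; the cardinal arithmetic is only needed later, for the prediction component of the Black Box, not for freeness. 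A correct proof of this lemma should therefore be purely combinatorial and make no use of those hypotheses.
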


\begin{proof} The proof follows by induction on $f$. We begin with $f=1$,
so $\size{\Omega}=\aln$. Let $\Omega
=\{\etabar^\a\mid \a <\omega\}$ be any enumeration without repetitions.
From $1=f\le\size{u_\etabar}$ follows $u_\etabar\ne \emptyset$
and we choose any $\ell_\a\in  u_{{\etabar}^\a}$ for $\a
<\omega$. If $\a\ne\b <\omega$, then $\etabar^\a \ne \etabar^\b $ and there is
$n_{\a \b} \in \omega$ such that $\etabar^\a\restl \langle
\ell_\a,n\rangle \ne\etabar^\b\restl \langle
\ell_\a,n\rangle$ for all $n\ge n_{\a\b}$.
Since $\bigcup \{ F(\etabar^\b) \mid \b\le \alpha\}$ is finite, we may enlarge $n_{\a \b}$, if necessary, such
that $\etabar^\a \restl \langle \ell_\a,n\rangle \notin \bigcup \{ F(\etabar^\b) \mid \b\le \alpha\}$
for all $n\ge n_{\a \b}$. If $n_\a=\max_{\b <\a}n_{\a \b}$, then
\[\etabar^\a\restl \langle \ell_\a,n\rangle\notin\{\etabar^\b\restl \langle \ell_\a,n\rangle \mid\b< \a\}
\cup \bigcup \{ F(\etabar^\b) \mid \b\le \alpha\} \text{ for all } n\ge n_\a.\]
Hence the case $f=1$ is settled. For the induction step, we let $f'= f+1$ and assume
that the lemma holds for $f$.

Let $\size{\Omega} =\aleph_{f}$ and choose an
${\aleph_{f}}$-filtration $\Omega
=\bigcup_{\d<\aleph_{f}}\Omega_\delta$ with $\Omega_0=\emptyset$ and
$\size{\Omega_{\d+1}\setminus \Omega_\d}=\aleph_{f-1}$ $(\d < \aleph_{f})$. The next crucial idea comes from
\cite{S3}: We can also assume that the chain $\{\Omega_\d\mid \d <\aleph_f\}$ is
\emph{closed}, meaning that for any $\d <\aleph_{f}$, $\nubar,\nubar'\in
\Omega_\delta$ and $\etabar\in \Omega$ with
\[\{\eta_m\mid 1\le m\le k\}\subseteq \{\nu_m,\nu'_m,\nu''_m \mid  \nubar'' \in F(\nubar) \cup F(\nubar'), 1\le m\le k\}\]
follows $\etabar\in \Omega_\delta$. Thus, if $\etabar\in \Omega_{\delta+1}\setminus \Omega_\delta$, then the set
\[u^*_\etabar =\{1\le \ell\le k\mid \exists n<\omega,\nubar\in \Omega_\d \text{ such that } \etabar\restl \langle \ell, n\rangle =\nubar\restl
\langle \ell, n\rangle \text{ or } \etabar\restl \langle\ell,n\rangle \in F(\nubar)\}\]
is empty or a singleton. Otherwise there are  $n,n'
<\omega$ and distinct $1\le \ell, \ell'\le k$ with $\etabar\restl
\langle \ell, n\rangle \in \{ \nubar\restl \langle \ell, n\rangle\}
\cup  F(\nubar)$ and $\etabar\restl \langle \ell', n'\rangle \in
\{\nubar'\restl \langle \ell', n'\rangle\}\cup  F(\nubar')$ for
certain $\nubar,\nubar'\in \Omega_\d$. Hence
\[\{\eta_m\mid 1\le m\le k\}\subseteq \{\nu_m,\nu'_m, \nu''_m \mid \nu''_m\in F(\nubar) \cup F(\nubar'),  1\le m\le k\},\]
and the closure property implies the contradiction $\etabar\in \Omega_\delta$.

If $\d <\aleph_{f}$, then let $D_\d=\Omega_{\d+1}\setminus \Omega_\d$
with $\size{D_\d}=\aleph_{f-1}$, and $u'_\etabar: = u_\etabar\setminus u^*_\etabar$ must have size
$\ge f'-1 =f$. Thus, the induction hypothesis applies to $\{u'_\etabar\mid \etabar \in D_\delta\}$
for each $\d <\aleph_f$ and we
find an enumeration $\langle \etabar^{\d\a} \mid \a < \aleph_{f-1}\rangle$ of $ D_\d$
as in the lemma. Finally, putting for $\d<\aleph_{f}$ all these enumerations together with the standard induced
ordering, we find an enumeration $\langle \etabar^\a\mid \a<\aleph_{f}\rangle$ of $\Omega$ satisfying the lemma.
\end{proof}

The sets $u_\etabar$ in Lemma \ref{freeprop} are merely auxiliary for the induction proof
and one may rather want to focus oneself on the following simplified statement.

\begin{theorem}\label{freelem}
For any function $F: \Lambda \to \mathcal{P}^{\operatorname{fin}}(\Lambda_*)$, and any subset of $\Omega$ of $\Lambda$ of cardinality $|\Omega|<\aleph_k$,
we can find an enumeration $\langle \etabar^\a\mid \a<|\Omega|\rangle$ of $\Omega$, and elements $1\le \ell_\a \le k$ and $n_\a<\om$ $(\a < |\Omega|)$ such that
\[\etabar^\a\restl \langle \ell_\a,n\rangle\notin \{\etabar^\b\restl \langle \ell_\a,n\rangle \mid\b< \a\}
\cup \bigcup \big\{ F(\etabar^\b) \mid \b\le \alpha\big\} \text{ for all } n\ge n_\a.\]
\end{theorem}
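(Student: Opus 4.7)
The plan is to derive Theorem \ref{freelem} as an immediate corollary of Lemma \ref{freeprop} by making the most forgiving choice of the auxiliary data $u_\etabar$ and the index $f$. Since $|\Omega|<\aleph_k$, we have $|\Omega|\le \aleph_{k-1}$, so (in the infinite case) we can fix $f$ with $1\le f\le k$ such that $|\Omega|=\aleph_{f-1}$. This $f$ is exactly the parameter required by Lemma~\ref{freeprop}.

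The key observation is that in the statement of Theorem~\ref{freelem} one is only asked for $\ell_\a \in \{1,\dots,k\}$, with no further restriction. So I set $u_\etabar=\{1,\dots,k\}$ for every $\etabar\in \Omega$. Then $|u_\etabar|=k\ge f$, and all the hypotheses of Lemma~\ref{freeprop} are satisfied. Applying the lemma to $F$, $\Omega$, and this choice of $(u_\etabar)_{\etabar \in \Omega}$ yields an enumeration $\langle \etabar^\a\mid \a<\aleph_{f-1}\rangle = \langle \etabar^\a\mid \a<|\Omega|\rangle$ together with indices $\ell_\a\in u_{\etabar^\a}=\{1,\dots,k\}$ and $n_\a<\omega$ for which the required combinatorial condition
\[\etabar^\a\restl \langle \ell_\a,n\rangle\notin \{\etabar^\b\restl \langle \ell_\a,n\rangle \mid\b< \a\}
\cup \bigcup \big\{ F(\etabar^\b) \mid \b\le \alpha\big\} \text{ for all } n\ge n_\a\]
holds, which is precisely the conclusion of Theorem~\ref{freelem}.

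The only case not covered by Lemma~\ref{freeprop} is $\Omega$ finite, since the lemma requires $|\Omega|=\aleph_{f-1}\ge \aleph_0$. I would dispose of this by enlarging $\Omega$ to any countable superset $\Omega'\subseteq \Lambda$, applying the $f=1$ case of Lemma~\ref{freeprop} (with $u_\etabar=\{1,\dots,k\}$) to $\Omega'$, and then restricting the resulting enumeration and data back to $\Omega$. The subset condition of the conclusion is clearly preserved under shrinking the index set.

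I do not expect any real obstacle here; the theorem is exactly what Lemma~\ref{freeprop} gives once the auxiliary sets $u_\etabar$ are discarded by choosing them maximally. The only bookkeeping detail is lining up the cardinality $|\Omega|$ with the index $f$, and handling the finite case by the trivial padding argument above.
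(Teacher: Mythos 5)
Your proposal is correct and is exactly the reduction the paper has in mind: the text preceding Theorem~\ref{freelem} says that the sets $u_\etabar$ in Lemma~\ref{freeprop} are ``merely auxiliary for the induction proof,'' and the theorem is presented as the ``simplified statement'' obtained by dropping them, which is precisely your choice $u_\etabar=\{1,\dots,k\}$ with $f$ chosen so that $|\Omega|=\aleph_{f-1}$. Your treatment of the finite case by padding $\Omega$ to a countable superset and restricting (noting that the excluded sets only shrink under an order-preserving sub-enumeration) is a clean way to cover the one case the lemma's hypothesis does not literally address.
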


\begin{remark}
In other words, every element $\etabar^\a$ of this enumeration picks up some new element from $\Lambda_*$ in its support $[\etabar^\a]$ which has not been
associated with any of the previous elements $\etabar^\b$ $(\b<\a)$. This will be the core of the support argument in the proof of Theorem \ref{freethm}.
\end{remark}

We continue with a description of the most common setup for $\aleph_k$-free constructions in ZFC.
We start with the $R$-module
\[B=\bigoplus_{\nubar\in \Lambda_*} Re_{\nubar}\]
freely generated by $\{e_{\nubar}\mid \nubar\in \Lambda_*\}$ over the $\mathbb{S}$-ring $R$. The $\mathbb{S}$-topology of $R$ naturally extends to the
$\mathbb{S}$-topology of $B$ generated by the basis $sB$ $(s\in \mathbb{S})$ of neighborhoods of $0$. Let
\[\Bhat \subseteq \prod_{\nubar\in \Lambda_*} \Rhat e_{\nubar}\]
denote the $\mathbb{S}$-completion of $B$. Thus every element $b\in \Bhat$ can be written canonically as a sum $b=\sum_{\nubar\in \Lambda_*}b_\nubar e_\nubar$
with coefficients $b_\nubar\in \Rhat$, and
\[[b]=\{\nubar\in \Lambda_*\mid b_\nubar\ne 0\}\]
will denote the \emph{support} of $b$. We have $B\subseteq_* \Bhat$, and we intend to construct an $\aleph_k$-free module
\[B\subseteq_* M\subseteq_* \Bhat\]
by adding suitable elements $y'_\etabar \in \Bhat$ $(\etabar \in \Lambda)$ to $B$.

For $\etabar\in \Lambda$ and $i<\omega$, we call
\[ y_{\etabar i}=\sum_{n= i}^\infty \frac{q_n}{q_i}\Bigg(\sum_{m=1}^{k}e_{\etabar\restl \langle m,n\rangle}\Bigg)\]
the \emph{branch element} associated with $\etabar$. In particular, let
\[y_{\etabar} = y_{\etabar 0} =\sum_{n=0}^{\infty}q_n \Bigg(\sum_{m=1}^{k}e_{\etabar\restl \langle m,n\rangle}\Bigg).\]

In addition, given a function $F: \Lambda \to \mathcal{P}^{\operatorname{fin}}(\Lambda_*)$ we choose elements $b_{\etabar n}\in B$
for $\etabar\in \Lambda$ and $n<\omega$ with $[b_{\etabar n}] \subseteq F(\etabar)$. Then we
introduce {\em branch-like elements} $y'_{\etabar i}$ by adding some corrections to our branch-elements $y_{\etabar i}$, namely
\[ y'_{\etabar i}= \sum_{n= i}^\infty \frac{q_n}{q_i}\Bigg(b_{\etabar n} +\sum_{m=1}^{k}e_{\etabar\restl \langle m,n\rangle}\Bigg)
= y_{\etabar i} +\sum_{n= i}^\infty \frac{q_n}{q_i} b_{\etabar n}.\]
In particular, we have
\[ y'_{\etabar}=y'_{\etabar 0}= \sum_{n= 0}^\infty q_n \Bigg(b_{\etabar n} +\sum_{m=1}^{k}e_{\etabar\restl \langle m,n\rangle}\Bigg)
= y_{\etabar} +\sum_{n= 0}^\infty q_n b_{\etabar n}.\]
Note $[y_\etabar] =[\etabar]$ and $[y'_\etabar] \subseteq F(\etabar) \cup [\etabar]$. Our module of interest is now given by
\[M= \langle B, y'_\etabar \mid \etabar \in \Lambda \rangle_* \subseteq_* \Bhat.\]
Note the following helpful recursions
\begin{align}\label{eq1}
y_{\etabar i}= s_i y_{\etabar, i+1}+\sum_{m=1}^{k}e_{\etabar\restl \langle m,i\rangle} \quad \mbox{ and } \quad
y'_{\etabar i}= s_i y'_{\etabar, i+1}+b_{\etabar i} +\sum_{m=1}^{k}e_{\etabar\restl \langle m,i\rangle}.
\end{align}
As a consequence we have the identity
\[M= \langle B, y'_\etabar \mid \etabar \in \Lambda \rangle_* =\langle B, y'_{\etabar i} \mid \etabar \in \Lambda, i < \omega \rangle.\]
The central theorem of this section is now the following statement about $\aleph_k$-freeness.

\begin{theorem}\label{freethm}
Let $M$ be the $R$-module
\[M=\langle B, y'_{\etabar i} \mid \etabar \in \Lambda, i < \omega \rangle = \langle B, y'_\etabar \mid \etabar \in \Lambda \rangle_* \subseteq_* \Bhat.\]
Then any subset $T$ of $M$ with $|T|<\aleph_k$ is contained in a free submodule $N\subseteq M$.
\end{theorem}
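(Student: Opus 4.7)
The plan is to combine the combinatorial enumeration furnished by Theorem~\ref{freelem} with the recursion~(\ref{eq1}) to exhibit an explicit $R$-free basis for a submodule $N \subseteq M$ containing $T$, using the fact that each branch-like generator $y'_{\etabar^\alpha i_\alpha}$ can be made to contribute a new coordinate to its support that none of the earlier generators sees.

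First, I will collect the data from $T$. Since $M=\langle B, y'_{\etabar i}\mid \etabar\in\Lambda,\,i<\omega\rangle$, every $t\in T$ admits a finite expression as an $R$-linear combination of certain $e_\nubar$'s and $y'_{\etabar i}$'s; fix such an expression for each $t$. Let $\Omega\subseteq\Lambda$ be the set of $\etabar$'s and $V_0\subseteq\Lambda_*$ the set of $\nubar$'s occurring in these expressions, so $|\Omega|,|V_0|\le |T|\cdot\aleph_0<\aleph_k$. Apply Theorem~\ref{freelem} to $\Omega$ with any $F\colon\Lambda\to\mathcal{P}^{\operatorname{fin}}(\Lambda_*)$ satisfying $[b_{\etabar n}]\subseteq F(\etabar)$ to obtain an enumeration $\Omega=\{\etabar^\alpha\mid\alpha<|\Omega|\}$ together with $\ell_\alpha\in\{1,\dots,k\}$ and $n_\alpha<\omega$ with
\[\etabar^\alpha\restl\langle \ell_\alpha,n\rangle\notin\{\etabar^\beta\restl\langle \ell_\alpha,n\rangle\mid \beta<\alpha\}\cup\bigcup\{F(\etabar^\beta)\mid\beta\le\alpha\}\qquad\text{for all }n\ge n_\alpha.\]
For each $\alpha$ I then choose $i_\alpha\ge n_\alpha$ strictly exceeding every $i$ for which $y'_{\etabar^\alpha i}$ appears in the fixed expression of some $t\in T$, set
\[V_\alpha=\{\etabar^\alpha\restl\langle m,j\rangle\mid 1\le m\le k,\;j<i_\alpha\}\cup\bigcup_{j<i_\alpha}[b_{\etabar^\alpha j}],\qquad V=V_0\cup\bigcup_{\alpha<|\Omega|}V_\alpha,\]
and define $N=\sum_{\nubar\in V}Re_\nubar+\sum_{\alpha<|\Omega|}Ry'_{\etabar^\alpha i_\alpha}\subseteq M$. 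Iterating~(\ref{eq1}) expresses each original $y'_{\etabar^\alpha i}$ in terms of $y'_{\etabar^\alpha i_\alpha}$ and finitely many $e_\nubar$'s with $\nubar\in V_\alpha$, so $T\subseteq N$.

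It remains to verify that the generating set $\{e_\nubar\mid\nubar\in V\}\cup\{y'_{\etabar^\alpha i_\alpha}\mid\alpha<|\Omega|\}$ is $R$-free. Given a finite relation
\[\sum_{\nubar\in V}r_\nubar e_\nubar+\sum_{\alpha<|\Omega|}r_\alpha y'_{\etabar^\alpha i_\alpha}=0\]
in $\Bhat$, let $\alpha^*$ be the largest $\alpha$ with $r_\alpha\ne 0$. Expanding in the canonical basis of $\Bhat$, the contribution from $y'_{\etabar^{\alpha^*}i_{\alpha^*}}$ to the coefficient of $e_{\etabar^{\alpha^*}\restl\langle \ell_{\alpha^*},n\rangle}$ equals $r_{\alpha^*}q_n/q_{i_{\alpha^*}}$ for every $n\ge i_{\alpha^*}$; the property at $\beta=\alpha^*$ guarantees that the terms $b_{\etabar^{\alpha^*}n'}$ contribute nothing. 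Using the disjointness $\Lambda_{m*}\cap\Lambda_{m'*}=\emptyset$ for $m\ne m'$, any nonzero contribution from some $y'_{\etabar^\beta i_\beta}$ with $\beta<\alpha^*$ would force $\etabar^{\alpha^*}\restl\langle \ell_{\alpha^*},n\rangle$ to be either of the form $\etabar^\beta\restl\langle \ell_{\alpha^*},n\rangle$ or to lie in $F(\etabar^\beta)$, and the enumeration property rules out both for $n\ge n_{\alpha^*}$; the terms with $\beta>\alpha^*$ vanish by maximality. Since only finitely many $r_\nubar$ are nonzero I may choose $n\ge i_{\alpha^*}$ large enough that $r_{\etabar^{\alpha^*}\restl\langle \ell_{\alpha^*},n\rangle}=0$ (setting $r_\nubar=0$ for $\nubar\notin V$). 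The coefficient equation at this index then reduces to $r_{\alpha^*}q_n/q_{i_{\alpha^*}}=0$, and the $\mathbb{S}$-torsion-freeness of $R$ forces $r_{\alpha^*}=0$, a contradiction. Hence all $r_\alpha$ vanish, and the residual relation $\sum_{\nubar\in V}r_\nubar e_\nubar=0$ in the free module $B$ then yields $r_\nubar=0$ for all $\nubar\in V$.

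The main technical obstacle is the support bookkeeping in this last step: one must use the disjointness of the sets $\Lambda_{m*}$ to channel every candidate contribution at $\etabar^{\alpha^*}\restl\langle \ell_{\alpha^*},n\rangle$ into exactly the configurations $\etabar^\beta\restl\langle \ell_{\alpha^*},n\rangle$ and $F(\etabar^\beta)$ that Theorem~\ref{freelem} was designed to exclude, while absorbing the few potentially problematic $\beta>\alpha^*$ collisions via the finite-support freedom to push $n$ large.
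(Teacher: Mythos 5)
Your proof is correct and relies on the same combinatorial engine as the paper, namely Theorem~\ref{freelem}, but the way you package the freeness verification is genuinely different in two respects. First, the paper encloses $T$ in the larger module
\[
M_\Omega=\langle e_{\etabar\restl \langle m,n\rangle},e_\nubar, y'_{\etabar n} \mid \etabar \in \Omega, \nubar \in F(\etabar), 1\le m\le k, n<\om \rangle,
\]
which carries \emph{all} branch-like elements $y'_{\etabar n}$ $(n<\omega)$ and \emph{all} generators $e_{\etabar\restl\langle m,n\rangle}$, whereas you build a leaner $N$ with exactly one branch-like generator $y'_{\etabar^\alpha i_\alpha}$ per $\alpha$ and a finite set $V_\alpha$ of $e_\nubar$'s per $\alpha$, using the forward iteration of the recursion \eqref{eq1} to recover the smaller-index $y'_{\etabar^\alpha i}$'s. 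Second, the paper verifies freeness of $M_\Omega$ via the ascending filtration $M_\alpha$ and an exhibition $M_{\alpha+1}=M_\alpha\oplus\bigoplus_{b\in\mathcal{B}_\alpha}Rb$, peeling off the coefficients by considering coordinates $e_{\etabar^\alpha\restl\langle\ell_\alpha,n\rangle}$ starting from the smallest appearing $n$; you instead run a single global independence check, isolating the maximal $\alpha^*$ with $r_{\alpha^*}\ne 0$ and then pushing $n$ large enough that the only surviving contribution at the coordinate $e_{\etabar^{\alpha^*}\restl\langle\ell_{\alpha^*},n\rangle}$ is $r_{\alpha^*}q_n/q_{i_{\alpha^*}}$, which vanishes by $\mathbb{S}$-torsion-freeness. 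Both arguments hinge on the same property \eqref{eq2} and the disjointness of the $\Lambda_{m*}$'s; your version trades the structural clarity of the paper's filtration (which exhibits $M_\Omega$ as a transfinite direct sum and is thus a bit more informative about the ambient module) for a shorter, more localized independence argument and a smaller free overmodule of~$T$.
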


\begin{proof}
With $M=\langle B, y'_{\etabar i} \mid \etabar \in \Lambda, i < \omega \rangle$, every element $g\in M$ can be written as an $R$-linear combination
of finitely many branch-like elements $y'_{\etabar i}$ and of finitely many generators $e_{\etabar\restl \langle m,n\rangle}$ of $B$. In particular,
collecting all $y'_{\etabar i}$ and $e_{\etabar\restl \langle m,n\rangle}$ needed for representing the elements $g \in T$, there exists a subset
$\Omega$ of $\Lambda$ of size $|\Omega| <\aleph_k$ such that $T$ is a subset of the submodule
\[M_\Omega=\langle e_{\etabar\restl \langle m,n\rangle},e_\nubar, y'_{\etabar n} \mid \etabar \in \Omega, \nubar \in F(\etabar), 1\le m\le k, n<\om \rangle \subseteq M.\]
To complete the proof, we will show that $M_\Omega$ is a free $R$-module.

With Theorem \ref{freelem} we write
\[M_\Omega =\langle  e_{\etabar^\a\restl \langle m,n\rangle}, e_\nubar, y'_{\etabar^\a n} \mid \a <|\Omega|, \nubar \in F(\etabar^\a), 1\le m\le k, n<\om \rangle\]
for a list $\langle \etabar^\a\mid \a<|\Omega|\rangle$ of $\Omega$ for which there exist $1\le \ell_\a \le k$ and $n_\a < \om$ with
\begin{align}\label{eq2}
\etabar^\a\restl \langle \ell_\a,n\rangle\notin \{\etabar^\b\restl \langle \ell_\a,n\rangle \mid\b< \a\} \cup \bigcup \big\{ F(\etabar^\b) \mid \b\le \alpha\big\}
\end{align}
for all $n\ge n_\a$. Let
\[M_\a =\langle  e_{\etabar^\g\restl \langle m,n\rangle}, e_\nubar, y'_{\etabar^\g n} \mid \g <\a, \nubar \in F(\etabar^\g), 1\le m\le k, n<\om \rangle\]
for any $\a < |\Omega|$. With \eqref{eq1}, we have
\begin{eqnarray*}
M_{\a +1} & = & M_\a+\langle e_{\etabar^\a\restl \langle m,n\rangle}, e_\nubar, y'_{\etabar^\a n} \mid \nubar \in F(\etabar^\a), 1\le m\le k, n<\om \rangle\\
& = & M_\a  + \langle y'_{\etabar^\a n}\mid n \ge n_\a\rangle +\langle e_{\etabar^\a\restl \langle \ell_\a,n\rangle}\mid n < n_\a\rangle\\
& & \quad\;\;\,  +\; \langle e_{\nubar}, e_{\etabar^\a\restl \langle m,n\rangle} \mid \nubar \in F(\etabar^\a), 1 \le m \le k, m\ne \ell_\a, n<\om \rangle.
\end{eqnarray*}

Hence, any element in $M_{\a +1}$ can be represented as a sum of the form
\[ g+\sum_{n\ge n_\a} r_n y'_{\etabar^\a n}+ \sum_{n<n_\a}r'_n e_{\etabar^\a\restl \langle
\ell_\a,n\rangle} + \sum_{\nubar \in F(\etabar^\a)} r_\nubar e_\nubar
+\sum_{n < \om} \sum_{\stackrel{1 \le m\le k}{m\ \ne\ \ell_\a}}r''_{mn} e_{\etabar^\a\restl \langle m,n\rangle},\]
where $g \in M_\a$, and all coefficients $r_n,r'_n, r_\nubar, r''_{mn}$ are from $R$. Moreover,
identifying $e_\nubar \ (\nubar \in F(\etabar^\a))$ with one of the $e_{\etabar^\a\restl \langle m,n \rangle}$s
whenever possible and merging all $e_{\etabar^\alpha}\restl \langle m,n\rangle, e_\nubar \in M_\a$ into $g$, we may
slightly simplify this sum.

Assume now that the above sum is zero. Condition \eqref{eq2} implies that $e_{\etabar^\a\restl \langle \ell_\a,n\rangle}$
contributes in this sum only to the branch part $y_{\etabar^\a n'}$ of $y'_{\etabar^\a n'}$ for $n_\a \le n' \le n$. Applying this to the $y'_{\etabar^\a n}$s,
starting with the smallest appearing $n$, we have $r'_n=0$ for all $n\ge n_\a$. Moreover, the remaining summands $g$,
$e_{\etabar^\a\restl \langle m,n\rangle}$, and $e_\nubar$ trivially have disjoint supports. Thus, also all the
coefficients $r_n, r_{\nubar},r''_{mn}$, and consequently also $g$ must be zero.
This shows that $M_{\a +1}=M_\a \oplus \bigoplus_{b \in \mathcal{B}_\alpha} Rb$ for
\begin{eqnarray*}
\mathcal{B}_\alpha & = & \{ y'_{\etabar^\a i}, e_{\etabar^\a\restl \langle \ell_\a,j\rangle},  e_{\etabar^\a\restl \langle m,n\rangle}, e_\nubar \mid\\
& & \qquad i \ge n_\a, j < n_\a, 1\le m \le k, m \ne \ell_\a, n<\omega, \nubar\in F(\etabar^\alpha)\} \setminus M_\a,
\end{eqnarray*}
and $M_\Omega=\bigoplus_{\alpha < |\Omega|} \bigoplus_{b \in \mathcal{B}_\alpha} Rb$
is a free $R$-module.
\end{proof}

\begin{remark}
It should be noted that the statements of Lemma \ref{freeprop}, Theorem \ref{freelem} and Theorem \ref{freethm} hold for any choice of infinite cardinals
$\lambda_1,\ldots,\lambda_k$. The additional properties of  $\bar\lambda$ required in Section~\ref{3.1} are irrelevant for the $\aleph_k$-freeness
and are only needed to obtain the added prediction feature. This will be our next stop!
\end{remark}

\subsection{The prediction}

No black box would be complete without some prediction principle, and it is noteworthy that the prediction
of any black box can be traced back to the following simple general statement.

\begin{theorem}[The Easy Black Box] Let $\l$ be an infinite cardinal and let $C$ be a set of size $|C|\le 2^\l$. Then
there exists some family $\langle \varphi_\eta \mid \eta \in \lup\rangle$ of functions $\varphi_\eta: \om \to C$
such that the following holds.\medskip

{\sc prediction principle:} Given any map $\varphi: \lupf \to C$ and any ordinal $\alpha \in \l$, there exists
some $\eta \in \lup$ such that $\eta(0)=\alpha$ and $\varphi_\eta(n)= \varphi(\eta\restr n)$ for all $n < \om$.
\end{theorem}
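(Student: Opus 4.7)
My approach would be a direct coding/enumeration argument, hinging on counting the objects in play. Since $|\lupf|=\lambda$ for any infinite $\lambda$, the total number of maps $\varphi:\lupf\to C$ is bounded by $|C|^\lambda \leq (2^\lambda)^\lambda = 2^\lambda$, so the family of pairs $(\varphi,\alpha)$ to be predicted has cardinality at most $2^\lambda$ as well. On the other hand, for each $\alpha\in\lambda$ the slice $E_\alpha := \{\eta\in\lup : \eta(0)=\alpha\}$ has cardinality $\lambda^{\aleph_0}$; under the cardinal arithmetic implicit in the paper's setup (the conditions $\lambda_1^{\aleph_0}=\lambda_1$ and $\lambda_{m+1}^{\lambda_m}=\lambda_{m+1}$ from Section~\ref{3.1} supplying enough room), this slice is large enough to fix an injection $\iota_\alpha:\{\varphi:\lupf\to C\}\hookrightarrow E_\alpha$.

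I would then define the family of predictors as follows. For each $\eta$ in the range of some $\iota_\alpha$, say $\eta=\iota_\alpha(\varphi)$, set
\[
\varphi_\eta(n) := \varphi(\eta\restr n) \quad\text{for all } n<\omega,
\]
and for every remaining $\eta$ pick $\varphi_\eta$ arbitrarily, say constantly zero. There is no conflict in this definition: if $\eta\in E_\alpha\cap E_{\alpha'}$, then $\alpha=\eta(0)=\alpha'$, so the index $\alpha$ and hence the preimage $\iota_\alpha^{-1}(\eta)$ are unambiguous. Verification of the prediction principle is then automatic: given any $\varphi$ and any $\alpha$, the element $\eta:=\iota_\alpha(\varphi)$ lies in $E_\alpha$, so $\eta(0)=\alpha$, and $\varphi_\eta(n)=\varphi(\eta\restr n)$ for every $n$ holds by construction.

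The only substantive step is thus the existence of the injection $\iota_\alpha$, which reduces to the cardinal comparison $|E_\alpha|=\lambda^{\aleph_0} \geq |C|^\lambda$. Everything else is bookkeeping, and I would expect this cardinal arithmetic --- resting on the hypothesis $|C|\leq 2^\lambda$ together with the specific features of the chosen $\bar\lambda$ --- to be the main point requiring care in any careful write-up of the theorem.
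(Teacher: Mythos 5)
Your argument has a decisive gap in the cardinal comparison, which you yourself flag as ``the main point requiring care.'' For each $\alpha$ you want an injection $\iota_\alpha$ from the set of all maps $\varphi\colon\lupf\to C$ into the slice $E_\alpha=\{\eta\in\lup : \eta(0)=\alpha\}$, which requires $|C|^\lambda\le|E_\alpha|=\lambda^{\aleph_0}$. But as soon as $|C|\ge 2$ one has $|C|^\lambda\ge 2^\lambda$, while $\lambda^{\aleph_0}\le 2^\lambda$ always, and in the cases of interest the inequality is strict: whenever $\lambda^{\aleph_0}=\lambda$ --- which the condition $\lambda_1^{\aleph_0}=\lambda_1$ of Section~\ref{3.1} \emph{imposes}, rather than prevents --- Cantor's theorem gives $\lambda^{\aleph_0}=\lambda<2^\lambda\le|C|^\lambda$, so $\iota_\alpha$ cannot exist. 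The hypothesis $|C|\le 2^\lambda$ only yields the upper bound $|C|^\lambda\le 2^\lambda$; it does nothing to push $\lambda^{\aleph_0}$ up to $|C|^\lambda$. Note also that the Easy Black Box is stated for an arbitrary infinite cardinal $\lambda$ with no reference to $\bar\lambda$, so the arithmetic of Section~\ref{3.1} is not even available as a hypothesis.

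The deeper problem is conceptual: you are trying to encode the entire map $\varphi$ into a single branch $\eta$, one $\eta$ per $\varphi$. The prediction principle only asks that $\varphi_\eta$ and $\varphi$ agree on the countably many restrictions $\eta\restr n$, so a single $\eta$ can --- and must --- serve a whole bundle of $2^\lambda$ many maps $\varphi$ that happen to agree along that countable set; there is no hope of, and no need for, an injection of all of $\{\varphi\colon\lupf\to C\}$ into $\lup$. The correct proof instead constructs $\eta$ recursively from $\varphi$ and $\alpha$: one fixes once and for all a decoding scheme so that $\varphi_\eta(n)$ is read off from the coordinates of $\eta$ past position $n$; one sets $\eta(0)=\alpha$; and, observing that once $\eta\restr n$ has been fixed the value $\varphi(\eta\restr n)\in C$ is already determined by $\varphi$, one chooses the subsequent entries of $\eta$ (strictly increasing, with plenty of room below $\lambda$ since only countably many ordinals have been used) so that they decode precisely to that element. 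It is this greedy construction, not a global injection, that the cardinal arithmetic actually supports, and your argument needs to be replaced by it.
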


In particular, the $\bar\l$-Black Box for $\bar\lambda=\langle \lambda_1,\ldots,\lambda_{k}\rangle$
basically constitutes the result of stacking $k$ Easy Black Boxes on top of each other.

\begin{theorem}[The $\bar\l$-Black Box]\label{bb}
For $\bar\lambda=\langle \lambda_1,\ldots,\lambda_{k}\rangle$ a sequence of cardinals as in Section \ref{3.1},
let $\bar C= \langle C_1,\ldots,C_{k}\rangle$ be a sequence with $|C_m|\leq\lambda_m$ $(1\le m\le k)$, and let
$C=\bigcup_{1\le m\le k}C_m$. Then
there exists some family $\langle \va_\etabar \mid \etabar\in \Lambda\rangle$
of functions $\varphi_{\bar\eta}:[\bar\eta]\to C$ such that the following holds.\medskip

{\sc prediction principle:} Given any map $\va: \Lambda_* \to C$ with $\Lambda_m\va \subseteq C_m$  for all
$1\le m\le k$, and given any ordinal $\alpha \in \l$, there exists
some $\etabar\in\Lambda$ such that $\eta_k(0)=\alpha$ and $ \va_\etabar\subseteq \va$.
\end{theorem}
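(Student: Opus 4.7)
The plan is to derive the $\bar\lambda$-Black Box by stacking $k$ applications of the Easy Black Box, one per coordinate, arranged in a strictly downward hierarchy to avoid any circular dependency. For each $1\le m\le k$, let $F_m$ denote the set of all functions $\lup_1\times\cdots\times\lup_{m-1}\to C_m$ (with $F_1=C_1$ when $m=1$). Since $|\lup_j|\le\lambda_j^{\aleph_0}\le\lambda_{m-1}$ for $j<m$, the assumptions on $\bar\lambda$ from Section~\ref{3.1} give $|F_m|\le\lambda_m^{\lambda_{m-1}}=\lambda_m\le 2^{\lambda_m}$, so the Easy Black Box is applicable at $\lambda=\lambda_m$ with target set $F_m$. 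For every choice of higher coordinates $(\eta_{m+1},\ldots,\eta_k)\in\lup_{m+1}\times\cdots\times\lup_k$, I invoke the Easy Black Box independently to obtain a family
\[\bigl\langle \psi^m_{\eta_m;\,\eta_{m+1},\ldots,\eta_k}:\omega\to F_m \,\bigm|\,\eta_m\in\lup_m\bigr\rangle.\]
With these data assembled, for $\bar\eta=(\eta_1,\ldots,\eta_k)\in\Lambda$ and $1\le m\le k$, $n<\omega$ I \emph{define}
\[\varphi_{\bar\eta}\bigl(\bar\eta\restl\langle m,n\rangle\bigr)=\psi^m_{\eta_m;\,\eta_{m+1},\ldots,\eta_k}(n)\bigl(\eta_1,\ldots,\eta_{m-1}\bigr),\]
producing a well-defined $\varphi_{\bar\eta}:[\bar\eta]\to C$ that lands in $C_m$ on the portion of $[\bar\eta]$ lying in $\Lambda_{m*}$.

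For the prediction principle, given $\varphi:\Lambda_*\to C$ with $\varphi(\Lambda_{m*})\subseteq C_m$ for each $m$, and an ordinal $\alpha$, I will produce $\eta_k,\eta_{k-1},\ldots,\eta_1$ recursively in this downward order. At step $m$, with $\eta_{m+1},\ldots,\eta_k$ already fixed, consider the auxiliary map $\Psi^m:\lupf_m\to F_m$ given by
\[\Psi^m(\sigma)(\nu_1,\ldots,\nu_{m-1})=\varphi(\nu_1,\ldots,\nu_{m-1},\sigma,\eta_{m+1},\ldots,\eta_k).\]
Applying the Easy Black Box prediction to the family $\langle\psi^m_{\eta_m;\,\eta_{m+1},\ldots,\eta_k}\mid\eta_m\in\lup_m\rangle$ supplies an $\eta_m\in\lup_m$ with $\psi^m_{\eta_m;\,\eta_{m+1},\ldots,\eta_k}(n)=\Psi^m(\eta_m\restr n)$ for every $n<\omega$; at the initial step $m=k$ the same invocation also enforces $\eta_k(0)=\alpha$. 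Evaluating this equality at $(\eta_1,\ldots,\eta_{m-1})$ yields $\varphi_{\bar\eta}(\bar\eta\restl\langle m,n\rangle)=\varphi(\bar\eta\restl\langle m,n\rangle)$ for every $m$ and $n$, that is, $\varphi_{\bar\eta}\subseteq\varphi$.

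The main obstacle I anticipate lies in the interplay between cardinal arithmetic and dependency direction: the bound $|F_m|\le\lambda_m$ hinges essentially on the stipulation $\lambda_m^{\lambda_{m-1}}=\lambda_m$ (with $\lambda_1^{\aleph_0}=\lambda_1$ covering the base case), and it is crucial that the family $\psi^m$ depend only on the \emph{higher} coordinates $\eta_{m+1},\ldots,\eta_k$ (so that $F_m$ absorbs only the \emph{lower} ones) in order that the downward selection $\eta_k\to\eta_{k-1}\to\cdots\to\eta_1$ remain well founded. Everything else is routine bookkeeping on the supports $[\bar\eta]$ and a direct unpacking of the Easy Black Box's prediction property.
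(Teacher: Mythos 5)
Your proof is correct: the cardinality bound $|F_m|\le\lambda_m\le 2^{\lambda_m}$ follows from the hypotheses of Section~\ref{3.1}, the downward recursion $\eta_k\to\cdots\to\eta_1$ is well founded because each Easy Black Box family $\psi^m$ is indexed only by the already-chosen higher coordinates while the target set $F_m$ absorbs the yet-undetermined lower ones, and unwinding the Easy Black Box equalities coordinate by coordinate yields $\varphi_{\bar\eta}\subseteq\varphi$. This is precisely the ``stacking of $k$ Easy Black Boxes'' that the paper describes immediately before Theorem~\ref{bb} (the paper itself does not write out a proof, deferring to the cited references), so your argument matches the intended approach.
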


\section{The proof of Theorem \ref{main}} \label{4}

For the proof of Theorem \ref{main}, (i) obviously implies (ii) as all $\aleph_k$-free groups are torsion-free.
Thus, we only need to verify the converse statement. To that effect, we will start with a group $C$ that fails to be
cotorsion, and we must provide an $\aleph_k$-free group $F_C$ with $\Ext(F_C,C) \ne 0$.

As $C$ fails to be cotorsion, with Theorem \ref{cotcrit}, we choose elements $c_n \in C$ $(n<\om)$
such that the infinite system of linear equations
\begin{align}\label{con0}
x_n=(n+1)x_{n+1}+c_n
\end{align}
is not solvable in $C$. For an infinite cardinal $\kappa \ge |C|$, let
\[\l_1= \kappa^{\aleph_0} \ge |C| \quad \mbox{ and } \quad \l_{i+1} =2^{\l_i}.\]
Then $\bar\lambda=\langle \lambda_1,\ldots,\lambda_{k}\rangle$ satisfies the properties of Section \ref{3.1}, and
we will use the prediction of the $\bar\lambda$-Black Box for the choice $C_m = C$ $(1\le m\le k)$, cf.~Theorem \ref{bb}.
In particular, there exists some family $\langle \va_\etabar \mid \etabar\in \Lambda\rangle$
of functions $\varphi_{\bar\eta}:[\bar\eta]\to C$ such that the following prediction principle holds.\medskip
\begin{align} \label{con1}
\mbox{Given any map $\va: \Lambda_* \to C$, there exists some $\etabar\in\Lambda$ such that $ \va_\etabar\subseteq \va$.}
\end{align}
We next want to construct two groups $F_C$ and $G_C$. To start with, let
\[B=\bigoplus_{\nubar\in \Lambda_*} \Z e_{\nubar}\]
be the group freely generated by $\{e_{\nubar}\mid \nubar\in \Lambda_*\}$.
Let $\Zhat$ and $\Bhat$ denote the $\mathbb{Z}$-adic completions of $\Z$ and $B$, respectively.
Every element $b\in \Bhat$ can be written canonically as a sum $b=\sum_{\nubar\in \Lambda_*}b_\nubar e_\nubar$
with coefficients $b_\nubar\in \Zhat$, and
\[[b]=\{\nubar\in \Lambda_*\mid b_\nubar\ne 0\}\]
will denote the \emph{support} of $b$. For $\etabar\in \Lambda$ and $i<\omega$, we call
\[ y_{\etabar i}=\sum_{n= i}^\infty \frac{n!}{i!}\Bigg(\sum_{m=1}^{k}e_{\etabar\restl \langle m,n\rangle}\Bigg)\]
the \emph{branch element} associated with $\etabar$. In particular, let
\[y_{\etabar} = y_{\etabar 0} =\sum_{n=0}^{\infty} n! \Bigg(\sum_{m=1}^{k}e_{\etabar\restl \langle m,n\rangle}\Bigg).\]
Note again the recursion
\begin{align}\label{con2}
y_{\etabar i}= (i+1) y_{\etabar, i+1}+\sum_{m=1}^{k}e_{\etabar\restl \langle m,i\rangle}.
\end{align}
These formulas are identical to those in Section \ref{3.2} for the choice $\mathbb{S}=\Z^{>0}$ and $s_i=i+1$.
We now define
\[F_C=\langle B, y_{\etabar i} \mid \etabar \in \Lambda, i < \omega \rangle = \langle B, y_\etabar \mid \etabar \in \Lambda \rangle_* \subseteq_* \Bhat.\]

\begin{lemma}
The group $F_C$ is $\aleph_k$-free.
\end{lemma}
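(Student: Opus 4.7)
The plan is to recognize that $F_C$ is nothing but a specialization of the generic module $M$ constructed in Section \ref{3.2}, so the $\aleph_k$-freeness is an immediate corollary of Theorem \ref{freethm}. No new combinatorial work is needed; only a careful matching of the two setups is required.

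First, I would fix the dictionary between the two frameworks. Take $R=\Z$ and $\mathbb{S}=\Z^{>0}$, and enumerate $\mathbb{S}$ as $s_i=i+1$, so that $q_n=\prod_{i<n}s_i=n!$. With this enumeration, the branch element formula
\[
y_{\etabar i}=\sum_{n=i}^{\infty}\frac{q_n}{q_i}\Bigg(\sum_{m=1}^{k}e_{\etabar\restl\langle m,n\rangle}\Bigg)
\]
from Section \ref{3.2} matches verbatim with the branch element formula in Section \ref{4}. Choosing further $F(\etabar)=\emptyset$ and $b_{\etabar n}=0$ for all $\etabar\in\Lambda$ and $n<\omega$, the branch-like elements $y'_{\etabar i}$ reduce to the branch elements $y_{\etabar i}$. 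Under this identification, the module $M=\langle B,y'_\etabar\mid\etabar\in\Lambda\rangle_*$ of Theorem \ref{freethm} coincides with $F_C=\langle B,y_\etabar\mid\etabar\in\Lambda\rangle_*$.

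Next, I would invoke Theorem \ref{freethm} directly: every subset $T\subseteq F_C$ with $|T|<\aleph_k$ is contained in a free $\Z$-submodule $N\subseteq F_C$. Given any subgroup $H\subseteq F_C$ with $|H|<\aleph_k$, applying the theorem with $T=H$ yields a free abelian subgroup $N\subseteq F_C$ containing $H$. Since every subgroup of a free abelian group is itself free, $H$ is free, and therefore $F_C$ is $\aleph_k$-free.

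The main (and only) concern is making sure the translation is clean, i.e., that the enumeration $s_i=i+1$ of $\mathbb{S}=\Z^{>0}$ and the vanishing of the correction terms $b_{\etabar n}$ genuinely turn the generic $M$ of Theorem \ref{freethm} into the present $F_C$. Once this bookkeeping is recorded, Theorem \ref{freethm} closes the proof, and no additional support argument has to be rerun in Section \ref{4}.
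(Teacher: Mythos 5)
Your proposal is correct and takes essentially the same approach as the paper: identify $F_C$ with the generic module $M$ of Section~\ref{3.2} (via $R=\Z$, $\mathbb{S}=\Z^{>0}$, $s_i=i+1$, and trivial corrections $b_{\etabar n}=0$), then apply Theorem~\ref{freethm} together with the fact that subgroups of free abelian groups are free. The paper already notes this identification immediately before the lemma, so your extra bookkeeping merely makes explicit what the paper leaves implicit.
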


\begin{proof}
Let $H\subseteq F_C$ be a subgroup of cardinality $|H|<\aleph_k$. Then, with Theorem~\ref{freethm}, $H$ is contained in a free subgroup of $F_C$
and therefore free itself.
\end{proof}

We start our construction of the group $G_C$ with a little auxiliary gimmick to overcome $C$ not embedding into its $\Z$-adic completion $\widehat C$, as
$\bigcap_{n \in  \Z^{> 0}} nC \ne 0$ may quite be possible.
Let $C^\om =\prod_{n<\om} C e_n$ denote the cartesian product of countably infinitely many copies of $C$. Every element $g\in C^\om$ can be written
canonically as a sum $g=\sum_{n=0}^\infty g_n e_n$ with coefficients $g_n \in C$, and $[g]=\{n<\om \mid g_n \ne 0\}$ will denote the support of $g$. We define
the groups
\[C^\om_{\operatorname{fin}} =\bigg\{g \in C^\om \bigg|\; [g] \mbox{ is finite with } \sum_{n=0}^\infty g_n =0 \bigg\} \subseteq C^\om\]
and
\[\bar C = C^\om/ C^\om_{\operatorname{fin}}.\]
Note that $C$ canonically embeds into $\bar C$ via $c \mapsto ce_0 + C^\om_{\operatorname{fin}} = ce_n + C^\om_{\operatorname{fin}}$.

The group $G_C$ will be constructed as a subgroup of $\Bhat \oplus \bar C$ and will incorporate our $\bar\lambda$-Black Box predictions $\varphi_\etabar$
$(\etabar \in \Lambda)$ and the preselected elements $c_n \in C$ $(n<\om)$.
For $\etabar\in \Lambda$ and $i<\omega$, let
\[ z_{\etabar i}=y_{\etabar i} + \Bigg(\sum_{n= i}^\infty \frac{n!}{i!}\bigg(c_n-\sum_{m=1}^{k}\varphi_\etabar\big(\etabar\restl \langle m,n\rangle\big)\bigg)e_n + C^\om_{\operatorname{fin}}\Bigg) \subseteq \Bhat \oplus \bar C.\]
Again we have a recursion
\begin{align}\label{con3}
z_{\etabar i}= (i+1) z_{\etabar, i+1}+\sum_{m=1}^{k}e_{\etabar\restl \langle m,i\rangle}+
\Bigg( \bigg(c_i-\sum_{m=1}^{k}\varphi_\etabar\big(\etabar\restl \langle m,i\rangle\big)\bigg)e_i + C^\om_{\operatorname{fin}}\Bigg).
\end{align}
We now define
\[G_C=\langle B\oplus C, z_{\etabar i} \mid \etabar \in \Lambda, i < \omega \rangle \subseteq \Bhat \oplus \bar C.\]

Let $\pi: \Bhat \oplus \bar C \to \Bhat$ denote the canonical projection. Then $\pi(e_\nubar)=e_\nubar$ for all $\nubar \in \Lambda_*$ and
$\pi(z_{\etabar i})=y_{\etabar i}$ for all $\etabar \in \Lambda, i<\om$, thus $\pi(G_C)=F_C$.

\begin{lemma} \label{lem1}
We have $G_C \cap \Ker \pi =C$.
\end{lemma}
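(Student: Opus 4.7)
The inclusion $C \subseteq G_C \cap \Ker \pi$ is immediate from the construction: $C$ sits inside $G_C$ as a direct summand of the generating set $B \oplus C$, and the canonical embedding places it inside $\bar C = \Ker \pi$. For the converse, I would take an arbitrary $g \in G_C \cap \Ker \pi$ and write it as $g = b + c + \sum_{j=1}^N r_j z_{\etabar_j, i_j}$ with $b \in B$, $c \in C$, $r_j \in \Z$, and finitely many $\etabar_j \in \Lambda$, $i_j < \om$, then show $g \in C$.

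The first step is to normalize the $z$-terms. Let $\Omega$ be the finite set of distinct base tuples among the $\etabar_j$, and apply Theorem~\ref{freelem} with $F \equiv \emptyset$ to obtain an enumeration $\etabar^\alpha$ with distinguished coordinates $\ell_\alpha$ and thresholds $n_\alpha$. Enlarging $n_\alpha$ if necessary (the conclusion \eqref{eq2} persists), I may assume $n_\alpha \ge i_j$ whenever $\etabar_j = \etabar^\alpha$. Iterating the recursion \eqref{con3} then rewrites each $z_{\etabar^\alpha, i_j}$ as an integer multiple of $z_{\etabar^\alpha, n_\alpha}$ plus a correction in $B$ plus a finite-support correction in $\bar C$. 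Crucially, any finite-support element $\sum_n g_n e_n$ of $C^\om$ is congruent modulo $C^\om_{\mathrm{fin}}$ to $(\sum_n g_n) e_0$, so every $\bar C$-correction lies in the canonical image of $C$. Collecting everything, I obtain a clean representation
\[g = b' + c' + \sum_\alpha s_\alpha z_{\etabar^\alpha, n_\alpha}, \qquad b'\in B,\ c'\in C,\ s_\alpha \in \Z,\]
and applying $\pi$ yields $b' + \sum_\alpha s_\alpha y_{\etabar^\alpha, n_\alpha} = 0$ in $\Bhat$.

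The main work is now to deduce $s_\alpha = 0$ for every $\alpha$, from which $b' = 0$ and hence $g = c' \in C$ will follow. I would carry this out by descending induction on $\alpha$: assuming $s_\beta = 0$ for $\beta > \alpha$, I examine the coefficient of $e_{\etabar^\alpha \restl \la \ell_\alpha, n\ra}$ in the equation above for $n \ge n_\alpha$. Comparing lengths coordinate-by-coordinate (the $\ell_\alpha$-th entry of $\etabar^\alpha \restl \la \ell_\alpha, n\ra$ is finite while the others are infinite), one sees that $\etabar^\alpha \restl \la \ell_\alpha, n\ra$ can lie in the support of $y_{\etabar^\beta, n_\beta}$ only when $\etabar^\alpha \restl \la \ell_\alpha, n\ra = \etabar^\beta \restl \la \ell_\alpha, n\ra$, and \eqref{eq2} precludes this for $\beta < \alpha$. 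Thus the only contribution is $s_\alpha \cdot n!/n_\alpha!$, which must balance a coefficient of $-b'$; since $b'$ has finite support, this forces $s_\alpha = 0$. The delicate point -- and what I expect to require the most care -- is this support/pattern-matching analysis, essentially a localized version of the argument underlying Theorem~\ref{freethm}; once it is in place the remainder of the argument is bookkeeping.
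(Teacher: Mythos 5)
Your proof is correct and follows the same basic strategy as the paper: reduce $g$ to a representation with one $z$-term per base tuple, apply Theorem~\ref{freelem}, and kill the $z$-coefficients by comparing coefficients of the distinguished generators $e_{\etabar^\alpha\restl \la \ell_\alpha,n\ra}$. There are two minor variations worth noting. First, you invoke Theorem~\ref{freelem} with $F\equiv\emptyset$ and then separately use the finiteness of $[b']$ to escape it for large $n$, whereas the paper takes $F(\nubar)=[b]$ constant so that \eqref{eq2} already avoids $[b]$; both work. Second, you normalize the $z$-indices to a common level $n_\alpha$, which the paper does not bother to do, but again either choice is fine. Your explicit descending induction on $\alpha$ (to rule out contributions from $y_{\etabar^\beta,n_\beta}$ with $\beta>\alpha$) is a welcome elaboration of a step the paper leaves implicit — the avoidance guaranteed by \eqref{eq2} is one-sided, so one really does need to argue from the largest index downward, and your coordinate-length observation correctly identifies why a collision of supports forces $m'=\ell_\alpha$ and $n'=n$.
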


\begin{proof}
Every element $g\in G_C$ can be written as a linear combination of some element from $B\oplus C$ with finitely many elements $z_{\etabar i}$.
With \eqref{con3} we can limit this representation to one element $z_{\etabar i}$ for each $\etabar \in \Lambda$. Thus, we can write
\[g = b+ (ce_0+ C^\om_{\operatorname{fin}})+\sum_{\a = 0}^N n_\a z_{\etabar^\a i^\a},\]
where $b\in B$, $c\in C$, $N \in \Z^{\ge 0}$, and  $n^\a \in \Z$, $i^\a \in \Z^{\ge 0}$ for all $0\le \a \le N$ with distinct $\etabar^\a \in \Lambda$.
Let us assume $\pi(g)=0$.

Applying Theorem \ref{freelem} for the function $F: \Lambda \to \mathcal{P}^{\operatorname{fin}}(\Lambda_*)$ with $F(\nubar) = [b]$ constant for $\nubar \in \Lambda_*$,
we may assume that every element $\etabar^\a$ of the enumeration $\langle \etabar^\a \mid 0\le \a \le N \rangle$ picks up some new element from $\Lambda_*$ in its
support $[\etabar^\a]$ which has not been associated with $b$ or any of the previous elements $\etabar^\b$ $(\b<\a)$. Thus, $\pi(g)=0$ implies $n_\a =0$ for all
$0\le \a \le N$, and $g=b+ (ce_0+ C^\om_{\operatorname{fin}})$. Hence, $\pi(g)=b=0$ implies  $g=ce_0+ C^\om_{\operatorname{fin}} \in C$.
\end{proof}

The following lemma completes the proof of Theorem \ref{main}.

\begin{lemma}
We have $\Ext(F_C,C)\ne 0$.
\end{lemma}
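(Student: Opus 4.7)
The plan is to assume $\Ext(F_C, C) = 0$ and derive a contradiction by extracting a solution to the unsolvable system \eqref{con0}. Under this hypothesis, the short exact sequence
\[
0 \to C \to G_C \xrightarrow{\pi|_{G_C}} F_C \to 0,
\]
which is exact by Lemma~\ref{lem1}, splits via some $\psi: F_C \to G_C$. The first step is to package $\psi$ into a function the $\bar\lambda$-Black Box can predict: since $e_\nubar - \psi(e_\nubar) \in G_C \cap \Ker \pi = C$ for every $\nubar \in \Lambda_*$, we may define $\varphi: \Lambda_* \to C$ by $\varphi(\nubar) := e_\nubar - \psi(e_\nubar)$, and the prediction \eqref{con1} produces some $\etabar \in \Lambda$ with $\varphi_\etabar(\nubar) = \varphi(\nubar)$ for all $\nubar \in [\etabar]$.

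The second step is to read off a solution of \eqref{con0} along this $\etabar$. For $i < \om$, set $x_i := z_{\etabar i} - \psi(y_{\etabar i})$; since $\pi(z_{\etabar i}) = y_{\etabar i} = \pi(\psi(y_{\etabar i}))$, we have $x_i \in G_C \cap \Ker \pi = C$. Applying $\psi$ to the $y$-recursion \eqref{con2} and subtracting the result from the $z$-recursion \eqref{con3}, while identifying $C$ with its canonical image in $\bar C$ via $c \mapsto c e_i + C^\om_{\operatorname{fin}}$, should produce
\[
x_i = (i+1)x_{i+1} + c_i - \sum_{m=1}^k \varphi_\etabar(\etabar \restl \langle m, i\rangle) + \sum_{m=1}^k \varphi(\etabar \restl \langle m, i\rangle)
\]
in $C$. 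By the prediction from step one the two $\varphi$-sums coincide and cancel, leaving $x_i = (i+1)x_{i+1} + c_i$ for every $i < \om$, which contradicts the choice of $(c_n)$ made at the start of Section~\ref{4}.

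The main obstacle is the sign-and-cancellation bookkeeping in the second step: the twist $-\sum_m \varphi_\etabar(\cdot)$ built into the $\bar C$-part of \eqref{con3} must exactly neutralize the $\sum_m \varphi(\cdot)$ produced when $\psi$ is applied to the $e$-terms in the $y$-recursion \eqref{con2}. A sign mismatch between the definition of $\varphi$ and the prediction would add rather than subtract, leaving a residual $2\sum_m \varphi(\cdot)$ and blocking the emergence of the clean recursion. With the convention $\varphi(\nubar) := e_\nubar - \psi(e_\nubar)$ and the resulting matching $\varphi_\etabar = \varphi$, the signs align and the contradiction with Theorem~\ref{cotcrit} follows immediately.
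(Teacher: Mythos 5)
Your proposal is correct and follows essentially the same route as the paper: assume a splitting $\psi$, encode its effect on the basis $\{e_\nubar\}$ as a map $\varphi\colon \Lambda_*\to C$, invoke the prediction~\eqref{con1} to find $\etabar$ with $\varphi_\etabar\subseteq\varphi$, set $x_i := z_{\etabar i}-\psi(y_{\etabar i})\in C$, and subtract the recursions \eqref{con2}, \eqref{con3} so the two $\varphi$-sums cancel, yielding $x_i=(i+1)x_{i+1}+c_i$ and contradicting the unsolvability of \eqref{con0}. Your sign bookkeeping in the final step (including the concern about a residual $2\sum_m\varphi(\cdot)$ term) agrees with the paper's computation, where the $-\sum_m\varphi_\etabar$ built into $z_{\etabar i}$ is exactly annihilated by $\sum_m(e-\psi(e))=\sum_m\varphi$.
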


\begin{proof}
With Lemma \ref{lem1}, we have the short exact sequence
\[
\begin{tikzcd}
0 \ar[r] & C \ar[r] & G_C \ar[r,"\pi"] & F_C \ar[r] & 0,
\end{tikzcd}
\]
and we claim that this exact sequence does not split. Towards a contradiction let us for the moment assume the existence
of a splitting homomorphism $\psi: F_C \to G_C$ with $\pi \circ \psi = \id_{F_C}$. Then in $G_C$ holds
\[e_\nubar - \psi(e_\nubar) \in G_C \cap \Ker \pi =C\]
for all $\nubar \in \Lambda_*$, and we define the function $\va: \Lambda_* \to C$ by $\va(\nubar) =e_\nubar - \psi(e_\nubar)$.
With~\eqref{con1}, we can choose some $\etabar \in \Lambda$ such that $\va_\etabar\subseteq \va$, thus
\[\va(\etabar\restl \langle m,n\rangle)=\va_\etabar(\etabar\restl \langle m,n\rangle) \]
for all $1\le m \le k$ and $n<\om$. In $G_C$ holds
\[z_{\etabar n} - \psi(y_{\etabar n}) \in G_C \cap \Ker \pi =C,\]
and we set $x_n := z_{\etabar n} - \psi(y_{\etabar n}) \in C$ for $n<\om$. With \eqref{con2} and \eqref{con3} we then have
\begin{eqnarray*}
& & x_n-(n+1)x_{n+1} = \big(z_{\etabar n}-(n+1)z_{\etabar, n+1}\big) - \psi\big(y_{\etabar n}-(n+1)y_{\etabar, n+1}\big)\\
& = & \sum_{m=1}^{k}e_{\etabar\restl \langle m,n\rangle}+
\Bigg( \bigg(c_n-\sum_{m=1}^{k}\varphi_\etabar\big(\etabar\restl \langle m,n\rangle\big)\bigg)e_n + C^\om_{\operatorname{fin}}\Bigg)
- \psi \bigg(\sum_{m=1}^{k}e_{\etabar\restl \langle m,n\rangle}\bigg)\\
& = & \sum_{m=1}^{k}\big( e_{\etabar\restl \langle m,n\rangle} - \psi ( e_{\etabar\restl \langle m,n\rangle})\big)
+ \Bigg( \bigg(c_n-\sum_{m=1}^{k}\varphi_\etabar\big(\etabar\restl \langle m,n\rangle\big)\bigg)e_n + C^\om_{\operatorname{fin}}\Bigg)\\
& = & \sum_{m=1}^{k}\big( \varphi\big(\etabar\restl \langle m,n\rangle\big)e_n+ C^\om_{\operatorname{fin}} \big)
+ \Bigg( \bigg(c_n-\sum_{m=1}^{k}\varphi_\etabar\big(\etabar\restl \langle m,n\rangle\big)\bigg)e_n + C^\om_{\operatorname{fin}}\Bigg)\\
& = & \bigg(c_n+\sum_{m=1}^{k}\Big( \varphi\big(\etabar\restl \langle m,n\rangle\big) - \varphi_\etabar\big(\etabar\restl \langle m,n\rangle\big)\Big) \bigg)e_n + C^\om_{\operatorname{fin}}\\
& = & c_n e_n + C^\om_{\operatorname{fin}}
\end{eqnarray*}
in $G_C$. From this we infer $x_n=(n+1)x_{n+1}+c_n$ in $C\subseteq \bar C \subseteq G_C$, contradicting~\eqref{3}. Hence, the aforementioned
exact sequence does not split, and $\Ext(F_C, C) \ne 0$ follows.
\end{proof}

\section{Final Remark} \label{5}
In Section \ref{4}, given any group $C$ which fails to be cotorsion, we chose cardinals
\[\l_1= \l_1^{\aleph_0} \ge |C| \quad \mbox{ and } \quad \l_{i+1} =2^{\l_i}\]
and used the $\bar\lambda$-Black Box for $\bar\lambda =\langle \l_1,\ldots,\l_k\rangle$
to construct an $\aleph_k$-free group $F_C$ with $\Ext(F_C, C) \ne 0$. It should be noted
that $B\subseteq F_C \subseteq \Bhat$ with $|B|=\l_k^{\aleph_0}=\l_k$ and
$|\Bhat|=|B|^{\aleph_0}=\l_k^{\aleph_0}=\l_k$. Thus we have $|F_C|=\l_k$,
and we actually can prove an even stronger statement as a natural extension of
Theorem \ref{mainGP} to $\aleph_k$-free groups.

\begin{lemma}[ZFC]
Let $\bar\lambda =\langle \l_1,\ldots,\l_k\rangle$ for $k\ge 2$ be a finite sequence of infinite cardinals with
\[\l_1= \l_1^{\aleph_0} \quad \mbox{ and } \quad \l_{i+1} =2^{\l_i}.\]
Then there exists an $\aleph_k$-free group $F$ of cardinality $|F|=\l_k$ such that for any group $C$
of cardinality $|C| \le \l_1$ the following statements are equivalent.
\begin{itemize}
\item[(i)] $C$ is cotorsion.
\item[(ii)] $\Ext(F,C) = 0$.
\end{itemize}
\end{lemma}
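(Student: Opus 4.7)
The main observation driving the plan is that the group $F_C$ constructed in Section \ref{4} never actually uses $C$: it is assembled purely from $B$ and the branch elements $y_{\etabar}$, and therefore depends only on $\bar\lambda$. Only the witness group $G_C$ uses $C$ through the correction elements $c_n$ and the predictions $\varphi_\etabar$. So my plan is to decouple the construction of the $\aleph_k$-free group from the choice of $C$ by fixing a once-and-for-all universal prediction family, and then absorbing each particular $C$ of size $\le\l_1$ by pulling back predictions through an injection.

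Concretely, I would fix a set $U$ of cardinality $\l_1$ and apply the $\bar\lambda$-Black Box (Theorem \ref{bb}) with $C_1=\cdots=C_k=U$ to obtain a single family $\{\varphi_\etabar:[\etabar]\to U\}_{\etabar\in\Lambda}$. Then I would define
\[F=\langle B,\, y_\etabar\mid\etabar\in\Lambda\rangle_*\subseteq\Bhat\]
exactly as in Section \ref{4}. Theorem \ref{freethm} immediately gives $\aleph_k$-freeness, and a direct cardinality count using $\l_m^{\aleph_0}=\l_m$ (which follows from the hypotheses) yields $|\Lambda_*|=|B|=|\Bhat|=\l_k$, hence $|F|=\l_k$. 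The direction (i)$\Rightarrow$(ii) is then trivial since $\aleph_k$-free groups are torsion-free.

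For (ii)$\Rightarrow$(i), let $C$ with $|C|\le\l_1$ be any group that fails to be cotorsion. By Theorem \ref{cotcrit} pick $c_n\in C$ witnessing unsolvability of $x_n=(n+1)x_{n+1}+c_n$. Fix an injection $\iota_C:C\hookrightarrow U$ (possible since $|C|\le\l_1=|U|$), and for each $\etabar\in\Lambda$ define $\tilde\varphi_\etabar:[\etabar]\to C$ by $\tilde\varphi_\etabar(\nubar)=\iota_C^{-1}(\varphi_\etabar(\nubar))$ when $\varphi_\etabar(\nubar)\in\iota_C(C)$, and $\tilde\varphi_\etabar(\nubar)=0$ otherwise. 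Now mimic the Section \ref{4} construction verbatim: define $z_{\etabar i}\in\Bhat\oplus\bar C$ using these $c_n$ and $\tilde\varphi_\etabar$, and set $G_C=\langle B\oplus C,z_{\etabar i}\mid\etabar\in\Lambda,\,i<\omega\rangle$. The projection $\pi:\Bhat\oplus\bar C\to\Bhat$ again satisfies $\pi(G_C)=F$, and Lemma \ref{lem1} transfers unchanged to give $G_C\cap\Ker\pi=C$, yielding a short exact sequence $0\to C\to G_C\to F\to 0$.

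The only remaining step — and the one that requires the universal-target trick — is to show this sequence does not split. If $\psi:F\to G_C$ were a splitting, the function $\varphi(\nubar)=e_\nubar-\psi(e_\nubar)\in C$ would be a map $\Lambda_*\to C$, so $\iota_C\circ\varphi:\Lambda_*\to U$ is predicted by the universal family: some $\etabar\in\Lambda$ satisfies $\varphi_\etabar(\nubar)=\iota_C(\varphi(\nubar))$ for all $\nubar\in[\etabar]$, which translates to $\tilde\varphi_\etabar(\nubar)=\varphi(\nubar)$ on $[\etabar]$. The computation at the end of Section \ref{4} then proceeds word for word to produce a solution $x_n=z_{\etabar n}-\psi(y_{\etabar n})\in C$ of the system $x_n=(n+1)x_{n+1}+c_n$, contradicting the choice of the $c_n$. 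The expected main obstacle is exactly this compatibility between the $C$-independent prediction family and the $C$-specific correction terms, and it is resolved by the embedding $\iota_C$, which lets a single $U$-valued prediction family serve simultaneously for every admissible $C$.
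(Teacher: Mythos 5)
Your proof is correct, but it takes a genuinely different route from the paper. The paper does not re-open the Black Box argument at all: it forms the family $\mathcal{D}$ of isomorphism representatives of all non-cotorsion groups of size $\le\lambda_1$ (of which there are at most $2^{\lambda_1}=\lambda_2$), sets $F=\bigoplus_{D\in\mathcal{D}}F_D$ with the $F_D$ from Section~\ref{4}, and concludes via $\Ext(\bigoplus_D F_D,C)\cong\prod_D\Ext(F_D,C)\ne 0$ once one factor $\Ext(F_{C'},C')\ne 0$ is known from Theorem~\ref{main}. Your approach instead exploits the sharper observation (which the paper does not make explicit) that $F_C$ already depends only on $\bar\lambda$ and not on $C$, so a single copy suffices; the price is that you must re-run the non-splitting argument with a fixed universal target $U$ of size $\lambda_1$ and pull the prediction back through an injection $\iota_C:C\hookrightarrow U$, which you do correctly. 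The pull-back bookkeeping (taking $\tilde\varphi_\etabar=\iota_C^{-1}\circ\varphi_\etabar$ on $\iota_C(C)$ and $0$ elsewhere, then predicting $\iota_C\circ\varphi$) is sound and makes the Section~\ref{4} computation go through verbatim. Each approach buys something: the paper's direct-sum argument is shorter and treats Section~\ref{4} as a closed module, whereas yours produces a smaller, conceptually cleaner $F$ and in fact removes the need for the hypothesis $k\ge 2$ altogether, since the $k\ge 2$ restriction in the paper is only there so that the index set $\mathcal{D}$ of size $\lambda_2$ can be absorbed into $\lambda_k$; your single-copy $F$ has cardinality $\lambda_k$ already for $k=1$.
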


\begin{proof}
Again, (i) obviously implies (ii) as all $\aleph_k$-free groups are torsion-free.
Thus, we only need to verify the converse statement. To that effect, we must provide a suitable group $F$
such that $\Ext(F,C) \ne 0$ for every group $C$ of cardinality $|C|\le \l_1$ which fails to be cotorsion.

For this purpose define the family $\mathcal{D}$ of groups to contain one isomorphic copy of every
group $C$ of cardinality $|C|\le \l_1$ which fails to be cotorsion.
Note that \[ |\mathcal{D}|\le \l_1^{|\l_1 \times \l_1|}=2^{\l_1}=\l_2.\]
We now define
\[ F= \bigoplus_{D\in \mathcal{D}} F_D,\]
which is an $\aleph_k$-free group of cardinality $|F|=\l_2 \cdot \l_k=\l_k$. If now $C$ is any group of
cardinality $|C|\le \l_1$ which fails to be cotorsion, then we can find some $C \cong C' \in \mathcal{D}$,
and
\[ \Ext(F,C) = \Ext\big(\bigoplus_{D\in \mathcal{D}} F_D,C \big) = \prod_{D\in \mathcal{D}} \Ext(F_D,C) \ne 0\]
as $\Ext(F_{C'},C) \cong \Ext(F_{C'},C') \ne 0$.
\end{proof}

\end{document}